\documentclass{commat}

\DeclareMathOperator{\aff}{aff}

\DeclareMathOperator{\conv}{conv}

\DeclareMathOperator{\dlat}{d}
\DeclareMathOperator{\flt}{flt}

\DeclareMathOperator{\inte}{int}

\DeclareMathOperator{\LE}{G}
\DeclareMathOperator{\lin}{lin}

\DeclareMathOperator{\sur}{F}

\DeclareMathOperator{\vol}{vol}
\DeclareMathOperator{\V}{V}
\DeclareMathOperator{\width}{w}

\newcommand{\ban}{B_n}

\newcommand{\cKn}{\mathcal{K}_{os}^n}

\newcommand{\dual}[1]{{#1}^\star}

\newcommand{\ip}[2]{\left\langle #1,#2\right\rangle}
\newcommand{\Kcn}{{\mathcal K}_{oc}^n}

\newcommand{\Kn}{{\mathcal K}^n}
\newcommand{\Knull}{{\mathcal K}_{o}^n}

\newcommand{\Lat}{\mathcal{L}}

\newcommand{\N}{\mathbb{N}}

\newcommand{\R}{\mathbb{R}}

\newcommand{\trans}{\intercal}
\newcommand{\va}{{\boldsymbol a}}
\newcommand{\vb}{{\boldsymbol b}}

\newcommand{\ve}{{\boldsymbol e}}

\newcommand{\vnull}{{\boldsymbol 0}}

\newcommand{\vx}{{\boldsymbol x}}
\newcommand{\vy}{{\boldsymbol y}}

\newcommand{\wonull}{\setminus\{\vnull\}}
\newcommand{\Z}{\mathbb{Z}}

\title{%
    Minkowski's successive minima in convex and discrete geometry
    }

\author{%
    Iskander Aliev and Martin Henk
    }

\authorinfo[I. Aliev]{%
   School of Mathematics, Cardiff University, United Kingdom}{%
   alievi@cardiff.ac.uk 
    }

\authorinfo[M. Henk]{%
    Institut f\"ur Mathematik, Technische Universit\"at Berlin,
  Germany}{%
    henk@math.tu-berlin.de
    }

\abstract{%
In this short survey we want to  present some of the impact of Minkowski's successive minima within Convex and Discrete Geometry. Originally related to the volume of an $o$-symmetric convex body,  we point out relations of the successive minima to other functionals, as e.g., the lattice point enumerator or the  intrinsic volumes  and we  present some  old and new conjectures about them. Additionally, we discuss an application of successive minima to a version of Siegel's lemma.
}

\keywords{%
    successive minima, convex body, Siegel's lemma, centered convex bodies, intrinsic
    volumes, lattice point enumerator
}

\msc{%
    52C07, 11H06, 52A40, 52A39 
}

\VOLUME{31}
\NUMBER{2}
\firstpage{35}
\DOI{https://doi.org/10.46298/cm.11155}

\begin{document}

\section{Introduction}
One of the basic questions in Geometry of Numbers, as well as in other
areas of mathematics like number theory or integer linear programming,
is to decide when
a set $S$ in the $n$-dimensional Euclidean space $\R^n$ contains an integral
point, i.e., a~point of the lattice $\Z^n$, possibly $\ne\vnull$, and
if necessary to determine such a~point. Here
``when'' usually refers to certain sizes/properties of the
set $S$, such as volume, thickness, covering or packing properties etc.

With respect to the class $\cKn$ of $o$-symmetric convex bodies, i.e.,
non-empty convex and compact sets $K\subset\R^n$ satisfying $K=-K$, and the
volume $\vol(\cdot)$, i.e., the $n$-dimensional Lebesgue measure,
Minkowski's classical, so called \emph{convex body theorem}
gives a~beautiful answer.
\begin{theorem}[Minkowski's convex body theorem,\! 1893,\! \protect{\cite{Minkowski1893}}]\!
 Let $K\in\cKn$ with $\vol(K)\geq 2^n$.  Then $K$ contains a~non-trivial lattice point,
  i.e., $K\cap\Z^n \wonull\ne\emptyset$.
\label{thm:convexbodytheorem}
\end{theorem}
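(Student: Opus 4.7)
The plan is to first establish Blichfeldt's translation/pigeonhole principle, and then use the symmetry and convexity of $K$ to turn a difference of two nearby points into a non-trivial lattice point in $K$. The equality case $\vol(K)=2^n$ will be separated from the strict-inequality case $\vol(K)>2^n$ by a compactness argument at the very end.

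First I would prove the following lemma of Blichfeldt: for any Lebesgue measurable $S\subset\R^n$ with $\vol(S)>1$ there exist distinct $\vx,\vy\in S$ with $\vx-\vy\in\Z^n$. The argument is to decompose $S$ into the pieces $S_{\vz}:=S\cap(\vz+[0,1)^n)$ indexed by $\vz\in\Z^n$ and translate each piece by $-\vz$ back into the unit half-open cube $[0,1)^n$. Since $\sum_{\vz\in\Z^n}\vol(S_{\vz})=\vol(S)>1=\vol([0,1)^n)$, two of the translates must overlap, producing a pair of distinct points in $S$ whose difference lies in $\Z^n$.

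Next, assuming the strict inequality $\vol(K)>2^n$, I would apply the lemma to $\tfrac{1}{2}K$, which has volume strictly greater than $1$. This yields distinct $\vx,\vy\in\tfrac{1}{2}K$ with $\vz:=\vx-\vy\in\Z^n\wonull$. By the symmetry $K=-K$ we have $-\vy\in\tfrac{1}{2}K$, hence both $2\vx$ and $-2\vy$ lie in $K$; convexity then yields $\vz=\tfrac{1}{2}(2\vx)+\tfrac{1}{2}(-2\vy)\in K$, the desired non-trivial lattice point.

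Finally, I would handle the boundary case $\vol(K)=2^n$ by a limit argument: for each $\varepsilon>0$ the dilate $(1+\varepsilon)K$ has volume larger than $2^n$ and so, by the previous paragraph, contains some $\vz_\varepsilon\in\Z^n\wonull$. All these points lie in the bounded set $2K$ and $\Z^n$ is discrete, so one fixed $\vz^{\ast}\ne\vnull$ appears as $\vz_\varepsilon$ along a sequence $\varepsilon\to 0^+$; closedness of $K$ then forces $\vz^{\ast}\in\bigcap_{\varepsilon>0}(1+\varepsilon)K=K$. The only real step with any content is Blichfeldt's lemma; once that is in hand the symmetry-plus-convexity trick and the compactness closure for the equality case are both routine, so I do not expect a serious obstacle.
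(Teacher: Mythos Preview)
Your argument is correct. The paper does not give a standalone proof of this theorem; it recasts the statement as the inequality $\lambda_1(K)^n\vol(K)\le 2^n$ (Theorem~\ref{thm:firstminimum}) and justifies that in a single sentence by noting that the symmetry of $K$ makes $\Z^n$ a packing lattice of $\tfrac12\lambda_1(K)K$, whence that body has volume at most~$1$. This packing observation is precisely the contrapositive of your Blichfeldt step---translates of $\tfrac12 K$ by distinct lattice vectors have disjoint interiors if and only if no two points of $\tfrac12 K$ differ by a nonzero lattice vector---so the two routes coincide in substance; you have just written the pigeonhole out explicitly and added the routine compactness closure for the boundary case $\vol(K)=2^n$, which the paper's sketch leaves implicit.
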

The lower bound on the volume is best possible, as, e.g., the cube
$C_n:=[-1,1]^n$ shows and Minkowski
called any convex body $K\in\cKn$ with $\vol(K)=2^n$ and
$\inte(K)\cap\Z^n =\{\vnull\}$, where
$\inte(\cdot)$ denotes the interior, an \emph{extremal convex
  body}. These are -- up to a~factor of $2$ -- exactly those convex bodies, actually polytopes,
which tile the space via $\Z^n$, i.e., $\Z^n$ is a~covering lattice as well
as a~packing lattice of $\frac{1}{2}K$ (see Section~\ref{sec:preliminaries} for precise definitions).

In his 1896 published book "Geometrie der Zahlen", Minkowski describes this
result as ``\dots ein Satz, der nach meinem Daf\"urhalten zu den
fruchtbarsten in der Zahlenlehre
zu rechnen ist.'' (\cite{Minkowski1896und1910}, p.~75), and indeed this theorem
has numerous applications in different areas for which we refer to~\cite{BombieriGubler2006, Gruber2007, MicciancioGoldwasser2002,
  Schmidt1979, Schmidt1991}.

Actually, in~\cite{Minkowski1893} Minkowski proved Theorem~\ref{thm:convexbodytheorem} as an inequality relating the
volume of $K$ and the minimal norm of a~non-trivial lattice point,
measured with respect to the gauge body $K$. In order to state this
inequality
 -- and later to generalize it -- we will use his 1896
introduced ``kleinstes System von unabh\"angig gerichteten
Strahlendistanzen im Zahlengitter'' (\cite{Minkowski1896und1910}, p.~178), the
so called \emph{successive minima}.

For $K\in\cKn$, $\dim (K)=n$, and $1\leq i\leq n$, the $i$-th
successive minimum $\lambda_i(K)$ is the smallest dilation factor
$\lambda$ such that $\lambda\,K$ contains at least $i$ linearly
independent
lattice points, i.e.,
\begin{equation*}
  \lambda_i(K):=\min\{\lambda >0 : \dim(\lambda K\cap\Z^n)\geq i\}.
\end{equation*}
For instance, if $B = B(a_1, \dotsc, a_n) := [-a_1,a_1]\times\cdots\times [-a_n,a_n]$ is a~box with
\[ a_1 \ge a_2 \ge \dotsb \ge a_n>0,\]
then $\lambda_i(B)=1/a_i$, $1\leq i\leq n$.

The successive minima form a~non-decreasing sequence
and, in particular, $\lambda_1(K)$ is the smallest number $\lambda$ such
that $\lambda\,K$ contains a~non-trivial lattice point.
Hence, Theorem~\ref{thm:convexbodytheorem} is equivalent to
\begin{theorem}
Let $K \in \cKn$, $\dim K=n$. Then
  \begin{equation*}
               \lambda_1(K)^n \,\vol(K)\leq 2^n.
             \end{equation*}
\label{thm:firstminimum}
\end{theorem}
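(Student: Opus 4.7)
My plan is to deduce Theorem~\ref{thm:firstminimum} directly from Minkowski's convex body theorem (Theorem~\ref{thm:convexbodytheorem}) --- which the excerpt itself flags as the equivalent statement --- by means of a scaling argument combined with a one-sided limit. The underlying idea is that any dilate of $K$ strictly smaller than $\lambda_1(K)\,K$ is, by construction, free of non-trivial lattice points and therefore subject to the contrapositive of Theorem~\ref{thm:convexbodytheorem}.

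First, I would fix an arbitrary $0 < \lambda < \lambda_1(K)$. Since $\vnull \in K$ (as $K=-K$) and $\lambda_1(K)$ is by definition the least dilation factor $\mu > 0$ with $\dim(\mu K \cap \Z^n) \ge 1$, the strict inequality $\lambda < \lambda_1(K)$ forces $\lambda K \cap \Z^n = \{\vnull\}$; in particular, $\lambda K$ contains no non-trivial lattice point. Next, I would apply Theorem~\ref{thm:convexbodytheorem} in contrapositive form to the body $\lambda K \in \cKn$ (which has $\dim(\lambda K) = n$ because $\lambda > 0$ and $\dim K = n$): an $o$-symmetric convex body containing no non-trivial lattice point cannot have volume at least $2^n$, so $\vol(\lambda K) = \lambda^n\,\vol(K) < 2^n$. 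Passing to the limit $\lambda \to \lambda_1(K)^-$ yields the claimed inequality $\lambda_1(K)^n\,\vol(K) \le 2^n$.

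I do not anticipate any serious obstacle, since the whole argument is essentially a reformulation of Theorem~\ref{thm:convexbodytheorem}. The one point meriting a remark is that $\lambda_1(K)$ is a well-defined positive real number: the boundedness of $K$ ensures that sufficiently small dilates meet $\Z^n$ only at $\vnull$ (so $\lambda_1(K) > 0$), while the full-dimensionality of $K$ guarantees that sufficiently large dilates contain arbitrarily many lattice points, so the infimum in the definition is actually attained and the limiting argument in the previous paragraph is meaningful.
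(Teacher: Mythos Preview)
Your argument is correct. It differs slightly from the paper's own justification, which does not pass through Theorem~\ref{thm:convexbodytheorem} as a black box but instead observes directly that, by the symmetry of $K$ and the definition of $\lambda_1(K)$, the lattice $\Z^n$ is a packing lattice of $\tfrac{1}{2}\lambda_1(K)\,K$, whence $\vol\bigl(\tfrac{1}{2}\lambda_1(K)\,K\bigr)\le 1$. That packing argument works at $\lambda=\lambda_1(K)$ itself (the key being that $\inte(\lambda_1(K)K)\cap\Z^n=\{\vnull\}$), so no limit is needed. Your route has the virtue of making the equivalence with Theorem~\ref{thm:convexbodytheorem} explicit, at the small cost of the limiting step; the paper's route is marginally more direct and in fact re-proves Theorem~\ref{thm:convexbodytheorem} in the process rather than invoking it.
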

The proof is based on the observation that due to the symmetry of $K$
and the definition of $\lambda_1(K)$, $\Z^n$ is a~packing lattice of $\frac{1}{2}\lambda_1(K)\,K$ and so its volume is at most $1$. Minkowski's so-called theorem
on successive minima is a~far reaching extension of Theorem~\ref{thm:firstminimum} in which $\lambda_1(K)^n$ is replaced by the product of
all successive minima. In addition, in this way a
lower bound is also possible which does not exist in Theorem~\ref{thm:firstminimum}.

\begin{theorem}[Minkowski's theorem on successive minima,\! 1896,\! \protect{\cite[Kapitel 5]{Minkowski1896und1910}}]\!\! Let $K\in\cKn$, $\dim K=n$. Then
 \begin{equation}
               \frac{2^n}{n!}\leq
               \lambda_1(K)\lambda_2(K)\cdots\lambda_n(K)\,\vol(K)\leq
               2^n.
\label{eq:secondminimum}
\end{equation}
\label{thm:secondminimum}
\end{theorem}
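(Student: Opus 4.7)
The plan is to handle the two inequalities in (\ref{eq:secondminimum}) by essentially different arguments. The lower bound follows from inscribing a cross-polytope, while the upper bound genuinely strengthens Theorem~\ref{thm:firstminimum} and calls for an anisotropic packing argument.

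\textbf{Lower bound.} Choose linearly independent lattice points $\vv_1,\ldots,\vv_n\in\Z^n$ with $\vv_i\in\lambda_i(K)K$ for $i=1,\ldots,n$. Since $K$ is $o$-symmetric and convex, it contains the cross-polytope
\[
C=\conv\left\{\pm \vv_i/\lambda_i(K)\;:\; 1\le i\le n\right\}.
\]
A direct volume computation gives
\[
\vol(C)=\frac{2^n}{n!}\cdot\frac{|\det[\vv_1,\ldots,\vv_n]|}{\lambda_1(K)\cdots\lambda_n(K)}.
\]
Because $\vv_1,\ldots,\vv_n$ form a non-singular integer matrix, its determinant is a non-zero integer, at least $1$ in absolute value. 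Hence $\vol(K)\ge\vol(C)\ge\frac{2^n}{n!\,\lambda_1(K)\cdots\lambda_n(K)}$, which is the left inequality of (\ref{eq:secondminimum}).

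\textbf{Upper bound (plan).} Set $L_0=\{\vnull\}$ and $L_i=\lin(\vv_1,\ldots,\vv_i)$ for $1\le i\le n$. The sublattice $L_i\cap\Z^n$ has rank $i$, so by a Hermite/Smith reduction one can choose a $\Z$-basis $\vb_1,\ldots,\vb_n$ of $\Z^n$ adapted to the flag, i.e.\ with $\lin(\vb_1,\ldots,\vb_i)=L_i$. The associated unimodular transformation affects neither $\vol(K)$ nor any $\lambda_i(K)$. In these adapted coordinates one aims to build, from $K/2$, a set $K^*$ satisfying
\begin{enumerate}
\item[\textup{(i)}] $\vol(K^*)=\frac{\lambda_1(K)\cdots\lambda_n(K)}{2^n}\vol(K)$, and
\item[\textup{(ii)}] the $\Z^n$-translates of $\inte K^*$ are pairwise disjoint,
\end{enumerate}
so that (ii) forces $\vol(K^*)\le 1$ and together with (i) yields the right inequality of (\ref{eq:secondminimum}). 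The construction of $K^*$ encodes the flag: the component of $K/2$ lying in $L_i$ relative to $L_{i-1}$ is expanded by a factor $\lambda_i(K)$, producing an anisotropic scaling whose overall Jacobian equals $\prod_i\lambda_i(K)/2^n$ and hence automatically delivers (i).

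\textbf{Main obstacle.} The entire weight of the argument sits on (ii). Assume that $(\inte K^*+\vz)\cap(\inte K^*+\vz')\ne\emptyset$ for distinct $\vz,\vz'\in\Z^n$, and let $i$ be the largest index with $\vz-\vz'\in L_i$. The goal is to manufacture from this configuration $i$ linearly independent lattice points lying strictly inside $\lambda_i(K)K$---the points $\vv_1,\ldots,\vv_{i-1}$ together with a new lattice point extracted from $\vz-\vz'$---thereby contradicting the minimality of $\lambda_i(K)$. The delicate issue is that the overlap only yields, a priori, a point of $\lambda_i(K)K\cap L_i$ with rational (not integer) coordinates in the basis $\vb_1,\ldots,\vb_i$; the layer-by-layer design of $K^*$ is precisely what guarantees integrality of those coordinates, and running the argument with $\lambda_i(K)-\varepsilon$ in place of $\lambda_i(K)$ and letting $\varepsilon\to 0^+$ converts set-theoretic overlap into a strict violation of the definition of $\lambda_i(K)$.
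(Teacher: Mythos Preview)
The paper does not actually prove Theorem~\ref{thm:secondminimum}; it only remarks that ``the lower bound follows by an inclusion argument'' and refers the reader to the literature (Minkowski's original proof, and the accounts in \cite{GruberLekkerkerker1987,Gruber2007,TaoVu2006a}) for the upper bound. Your cross-polytope argument for the lower bound is exactly that inclusion argument, and it is complete and correct.

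For the upper bound your outline is in the spirit of the Davenport/Estermann style proofs, but as written it is not a proof: the object $K^*$ is never defined. You describe it simultaneously as coming from an ``anisotropic scaling'' (which suggests the linear map $\vb_i\mapsto\lambda_i\vb_i$) and as having a ``layer-by-layer design''. These are different constructions, and the distinction matters: the naive diagonal linear map $T$ with $T\vb_i=\lambda_i\vb_i$ gives the Jacobian you want for (i), but it does \emph{not} in general give (ii), precisely because $T^{-1}\vz$ need not be a lattice point---the very integrality obstacle you flag. You then assert that ``the layer-by-layer design of $K^*$ is precisely what guarantees integrality,'' but you never specify that design, so the central difficulty is acknowledged rather than overcome. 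In the standard proofs (e.g.\ Davenport's, as presented in \cite[Theorem~23.1]{Gruber2007}) the set $K^*$ is built by an inductive slice-and-stack procedure, not by a single linear map, and the integrality is won by a genuine construction that your sketch does not supply. There is also a slip: ``let $i$ be the largest index with $\vz-\vz'\in L_i$'' always yields $i=n$ since $L_n=\R^n$; you need the \emph{smallest} such index (equivalently, the largest $j$ with a nonzero $\vb_j$-coordinate), so that $\vz-\vz'\notin L_{i-1}$ and independence from $\vv_1,\dots,\vv_{i-1}$ is guaranteed.
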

The lower bound follows by an inclusion argument and it is attained,
e.g., for the regular cross-polytope
$\dual{C_n}:=\conv(\{\pm\ve_i: 1\leq i\leq n\})$, where $\conv(\cdot)$
denotes the convex hull of a~set, and $\ve_i$ are the canonical unit vectors.
The crucial part is the upper bound, which is considered as a~deep and
important result in Geometry of Numbers. There are many alternative
proofs available for this result (see, e.g.,~\cite[Section 9 and
Section ii]{GruberLekkerkerker1987},~\cite[Section
23]{Gruber2007},~\cite[Section 3.5]{TaoVu2006a} and the references within), but, maybe, the most
geometric one is still Minkowski's original proof (see, e.g.,~\cite[Theorem 23.1]{Gruber2007}).

The applications of the upper bound in Theorem~\ref{thm:secondminimum}
 are probably not as numerous as those of its
precursor Theorem~\ref{thm:firstminimum}, but, in general, they are
also less ``elementary''. For instance, the bound is used in order to prove
Minkowski's finiteness theorems in reduction theory
(see, e.g.,~\cite{Waerden1956}), it appears in the proof of W. M.~Schmidt of his
(strong)
subspace theorem~\cite[pp.~162]{Schmidt1979}
or it is also used in proofs of Freiman's theorem in additive
combinatorics (see, e.g.,~\cite{Chang2002}). Another prominent
application is the Bombieri-Vaaler extension of Siegel's lemma which
we will discuss in more detail in Section~\ref{sec:bombierivaalersiegel}.

Since Minkowski's introduction of the successive minima they have
become an important tool/measure in different areas of mathematics.
In this short survey we want to present some of their impact
within convex and discrete geometry by showing relations of the
successive minima to other functionals (e.g., lattice point
enumerator, intrinsic volumes) as well as presenting some old and new
conjectures related to them. For their immense impact on Diophantine
Geometry we refer to~\cite{BombieriGubler2006, Schmidt1979} and for
algorithmic questions related to them see~\cite{MicciancioGoldwasser2002, Peikert2014a}.

\section{Preliminaries}\label{sec:preliminaries}
Here we briefly introduce some more basic notation, for a~thorough treatment
we refer to~\cite{Cassels1971, GruberLekkerkerker1987, Gruber2007, Schneider2014}.
Let $\Kn$ be the family of all non-empty convex bodies in $\R^n$, i.e.,
compact convex sets $K\subset\R^n$, and let $\Knull\subset\Kn$ be the
set of those convex
bodies with $\vnull\in\inte(K)$. The subfamily of centered convex
bodies, i.e., those $K\in\Knull$ whose centroid
\begin{equation*}
      \frac{1}{\vol(K)}\int _K\vx\, \dlat^n \vx
\end{equation*}
is at the origin is denoted by $\Kcn$. The $n$-dimensional Euclidean
unit ball is denoted by $\ban$ and its volume by $\omega_n$. According to a~result of Steiner
the volume of $K+\rho B_n$, $\rho\geq 0$, is a~polynomial of degree $n$ in $\rho$ which
we can write as~\cite[Section 4.2]{Schneider2014}
\begin{equation*}
  \vol(K+\rho\ban)=\sum_{i=0}^n \omega_i\V_{n-i}(K) \rho^i.
\end{equation*}
The coefficient $\V_i(K)$
is called the $i$-th intrinsic volume of
$K$, $i=0,\dots,n$; in
particular,
\begin{equation}
   \V_n(K)=\vol(K),\quad\V_{n-1}(K)=\frac{1}{2}\sur(K),\quad
   \V_0(K)=1,
\label{eq:intrinsicvolumes}
 \end{equation}
where $\sur(K)$ is the surface area of $K$. We remark that intrinsic
volumes are, up to some normalization, are mixed volumes for which we
refer to~\cite{BuragoZalgaller1988, Schneider2014}.

For $K\in \Knull$ the polar
set $\dual{K}$, defined as
\begin{equation*}
              \dual{K}:=\{\vy\in\R^n : \ip{\vx}{\vy}\leq 1\text{ for
                all }\vx\in K\},
            \end{equation*}
is again a~convex body. Here $\ip{\cdot}{\cdot}$ denotes the standard
inner product on $\R^n$.
The set of all $m$-dimensional
lattices $\Lambda\subset\R^n$ will be denoted by $\Lat^n_m$, i.e.,
\begin{equation*}
  \Lat^n_m:=\{B\Z^m : B\in\R^{n\times m},\,\text{\rm rank}(B)=m\}.
\end{equation*}
In the case $m=n$ we just write $\Lat^n$.
As usual, for $\Lambda=B\Z^m \in\Lat^n_m$,
$\det(\Lambda):=\sqrt{\det(B^\trans B)}$ is called the determinant of $\Lambda$.
The polar lattice of $\Lambda$ is given by
\begin{equation*}
   \dual{\Lambda}:=\{\vy\in \lin(\Lambda): \ip{\vx}{\vy}\in\Z\text{
     for all }\vx\in\Lambda\},
\end{equation*}
and it is $\det(\dual{\Lambda})=1/\det(\Lambda)$. Here $\lin(\cdot)$
denotes the linear hull.

With respect to a~convex body $K\in\Knull$ and a~general lattice $\Lambda\in\Lat^n$, the $i$-th successive
minimum $\lambda_i(K,\Lambda)$ is given by
\begin{equation*}
  \lambda_i(K,\Lambda):=\min\{\lambda >0 : \dim(\lambda K\cap\Lambda)\geq i\},
\end{equation*}
where the dimension of a~set $S$ is always meant with respect to its
affine hull $\aff(S)$.
In case $\Lambda=\Z^n$ we write $\lambda_i(K)$ and if
$K=B_n$, we abbreviate
$\lambda_i(B_n,\Lambda)$ by $\lambda_i(\Lambda)$, which is also called
the $i$-th successive minimum of the lattice $\Lambda$.
Since for
$\Lambda=B\Z^n \in\Lat^n$ we have $\lambda_i(B^{-1}K)=\lambda_i(K,\Lambda)$
and $\vol(B^{-1}K)= \vol(K)/\det(\Lambda)$, Minkowski's
Theorem~\ref{thm:secondminimum} can be equivalently stated for arbitrary lattices
and $K\in\cKn$ as
\begin{equation}
\det(\Lambda)\frac{2^n}{n!}\leq
\lambda_1(K,\Lambda)\lambda_2(K,\Lambda)\cdots\lambda_n(K,\Lambda)\,\vol(K)\leq
2^n \det(\Lambda).
\label{eq:secondminimumgenlat}
\end{equation}

A lattice $\Lambda\in\Lat^n$ will be called a~covering lattice of
$K\in\Kn$ if $\R^n =\Lambda+K$ and a~packing lattice if
$\inte(\va+K)\cap\inte(\vb+K)=\emptyset$ for all $\va\ne\vb\in
\Lambda$. Given a~$K\in \Kn$ and a~lattice $\Lambda\in\Lat^n$, then
\begin{equation*}
  \lambda_1(K-K,\Lambda)=\max\{\rho: \Lambda \text{ packing lattice of
  } \rho\,K\}
\end{equation*}
and
\begin{equation}
  \delta(K):=\max\left\{\frac{\vol\big(\lambda_1(K-K,\Lambda)
        \,K\big)}{\det(\Lambda)}: \Lambda\in\Lat^n \right\}
\label{def:density}
\end{equation}
is called the density of a~densest lattice packing of $K$. Here the
ratio \[\vol(\lambda_1(K-K,\Lambda)\,K)/\det(\Lambda)\]
describes the proportion of space which is occupied by the
packing $\Lambda+\lambda_1(K-K)\,K$.
Observe, that $0<\delta(K)\leq 1$.

The covering counterpart to $\lambda_1(K-K,\Lambda)$ is the so called
covering radius
\begin{equation*}
    \mu(K,\Lambda)=\min\{\mu>0 : \mu\,K+\Lambda=\R^n \},
\end{equation*}
i.e., the smallest $\mu>0$ such that $\Lambda$ is a~covering lattice of
$\mu\,K$. In analogy to~\eqref{def:density}, the minimum of $\vol(\mu(K,\Lambda)\,K)/\det(\Lambda)$
with respect to all lattices leads to the density of a~thinnest lattice
covering, but we do not need this quantity here.

By definition we have
$\lambda_1(K-K,\Lambda)\leq \mu(K,\Lambda)$ and more generally,
\begin{equation*}
  \lambda_n(K-K,\Lambda)\leq\mu(K,\Lambda)\leq
     \lambda_1(K-K,\Lambda)+\cdots + \lambda_n(K-K,\Lambda).
\end{equation*}
This was shown in the symmetric case by Jarn{\'\i}k (see~\cite[Theorem 23.4]{Gruber2007}). The general case
was treated
by Kannan and Lov\'asz in~\cite[Lemma 2.4]{KannanLovasz1986}, where they
also introduced the so called
covering minima $\mu_i(K,\Lambda)$, $1\leq i\leq n$, which can be
regarded as covering counterparts to the successive minima. For more
information on these functionals see,e.g.,~\cite{KannanLovasz1986, MerinoSchymura2017}.

\section{Possible Tightenings and Generalizations of Minkowski's theorem}
First we state a~straightforward extension of Minkowski's Theorem~\ref{thm:secondminimum} to arbitrary, not necessarily $o$-symmetric convex bodies. To this
end we consider for $K\in\Kn$ its central symmetrical
\begin{equation*}
       K_s:=\frac{1}{2}(K-K)\in\cKn.
     \end{equation*}
 Obviously, $K_s=K$ for $K\in\cKn$ and by the classical Brunn-Minkowski inequality~\cite[Section
     7]{Schneider2014} we know that
     \begin{equation*}
      \vol(K)\leq \vol(K_s)
     \end{equation*}
with equality if and only if $K$ and $K_s$ are translates of each
other. Thus, the upper bound in~\eqref{eq:secondminimumgenlat} applied to
$K_s$ gives
\begin{equation*}
\lambda_1(K_s,\Lambda)\lambda_2(K_s,\Lambda)\cdots\lambda_n(K_s,\Lambda)\,\vol(K)\leq 2^n \det(\Lambda),
\end{equation*}
and it is also easy to see that the corresponding lower bound in~\eqref{eq:secondminimumgenlat} holds true (see, e.g., the discussion in~\cite{HenkHenzeHernandezCifre2016}). Hence, Minkowski's Theorem~\ref{thm:secondminimum} can be stated in a~bit more general form.
\begin{theorem}
Let
  $K\in\Kn$, $\dim K=n$, and let $\Lambda\in\Lat^n$. Then
\begin{equation}
\det(\Lambda)\frac{2^n}{n!}\leq
\lambda_1(K_s,\Lambda)\lambda_2(K_s,\Lambda)\cdots\lambda_n(K_s,\Lambda)\,\vol(K)\leq
2^n \det(\Lambda).
\label{eq:secondminimumgen}
\end{equation}
\label{thm:secondminimumgen}
\end{theorem}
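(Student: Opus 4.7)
The plan is to deduce \eqref{eq:secondminimumgen} by passing to the $o$-symmetric body $K_s = \frac{1}{2}(K-K) \in \cKn$ and invoking Minkowski's theorem in its lattice form \eqref{eq:secondminimumgenlat}, coupled with the Brunn--Minkowski inequality comparing $\vol(K)$ and $\vol(K_s)$.

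For the upper bound, I apply \eqref{eq:secondminimumgenlat} to $K_s$ and obtain $\lambda_1(K_s,\Lambda)\cdots\lambda_n(K_s,\Lambda)\,\vol(K_s) \le 2^n\det(\Lambda)$. Writing $K_s = \frac{1}{2}K + \frac{1}{2}(-K)$, the Brunn--Minkowski inequality yields $\vol(K_s)^{1/n} \ge \frac{1}{2}\vol(K)^{1/n} + \frac{1}{2}\vol(-K)^{1/n} = \vol(K)^{1/n}$, so $\vol(K) \le \vol(K_s)$, and substituting in the previous inequality delivers the upper bound of \eqref{eq:secondminimumgen}.

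For the lower bound I mimic the standard inclusion argument used for $o$-symmetric bodies. I pick linearly independent lattice points $\vb_1,\dots,\vb_n \in \Lambda$ with $\vb_i \in \lambda_i(K_s,\Lambda)\,K_s$ and write $\vb_i = \frac{\lambda_i(K_s,\Lambda)}{2}(\vu_i-\vw_i)$ for suitable $\vu_i,\vw_i \in K$. The $o$-symmetric cross-polytope $P := \conv\{\pm\vb_i/\lambda_i(K_s,\Lambda)\}$ lies in $K_s$ and satisfies
\begin{equation*}
\vol(P) = \frac{2^n\,|\det(\vb_1,\dots,\vb_n)|}{n!\,\lambda_1(K_s,\Lambda)\cdots\lambda_n(K_s,\Lambda)} \ge \frac{2^n\,\det(\Lambda)}{n!\,\lambda_1(K_s,\Lambda)\cdots\lambda_n(K_s,\Lambda)}.
\end{equation*}
It therefore suffices to prove the volume comparison $\vol(K) \ge \vol(P)$, i.e., the sharp determinantal inequality
\begin{equation*}
|\det(\vu_1-\vw_1,\dots,\vu_n-\vw_n)| \le n!\,\vol(K) \quad\text{for any } \vu_i,\vw_i\in K.
\end{equation*}

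The main obstacle is this determinantal estimate --- equivalently, that the largest simplex inscribed in the difference body $K-K$ has volume at most $\vol(K)$, a proper sharpening of the bound $\vol(P) \le \vol(K_s)$ guaranteed by $P \subseteq K_s$ (which, combined with $\vol(K)\le\vol(K_s)$, goes the wrong way). A proof proceeds by translating $K$ so that $\vnull \in K$, expanding the determinant multilinearly as $\det(\vu_i-\vw_i) = \sum_{\epsilon\in\{0,1\}^n}(-1)^{|\epsilon|}\det(\vz_1^\epsilon,\dots,\vz_n^\epsilon)$ with $\vz_i^\epsilon \in \{\vu_i,\vw_i\}\subseteq K$, and then exploiting cancellations in the alternating sum together with the elementary bound $|\det(\vz_1,\dots,\vz_n)|\le n!\vol(K)$ for simplices with one vertex at the origin and the remaining vertices in $K$; the details are worked out in the discussion in \cite{HenkHenzeHernandezCifre2016}.
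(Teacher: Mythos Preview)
Your argument is correct and follows the paper's route exactly: the upper bound via Brunn--Minkowski together with Minkowski's theorem applied to $K_s$, and the lower bound by the cross-polytope inclusion with the remaining determinantal inequality $|\det(\vu_1-\vw_1,\dots,\vu_n-\vw_n)|\le n!\,\vol(K)$ deferred to \cite{HenkHenzeHernandezCifre2016}, just as the paper does (it merely says the lower bound is ``easy to see'' and cites the same reference, whereas you helpfully isolate the precise lemma that is needed). One caution on your sketch of that lemma: the multilinear expansion into $2^n$ signed terms together with the elementary simplex bound $|\det(\vz_1,\dots,\vz_n)|\le n!\,\vol(K)$ only yields $2^n\,n!\,\vol(K)$ by the triangle inequality, and you have not indicated which ``cancellations'' recover the missing factor of $2^n$---so the deferral to the reference is carrying real weight here rather than just filling in routine details.
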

In fact, the body $K_s$ is a~rather natural candidate for such an
extension. As $\Lambda$ is a~packing lattice of
$\lambda_1(K-K,\Lambda)\,K$ we get by the definition of the density of
a densest lattice packing~\eqref{def:density} the inequality
\begin{equation*}
  \lambda_1(K_s,\Lambda)^n \vol(K) =
  2^n \lambda_1(K-K,\Lambda)^n \vol(K)\leq \delta(K)\,2^n \det(\Lambda)
\end{equation*}
which is a~tightening of Minkowski's Theorem~\ref{thm:firstminimum}
(for general $K\in\Kn$ and $\Lambda\in\Lat^n$). If such an
improvement is also possible for the upper bound in Minkowski's Theorem~\ref{thm:firstminimum} is the content of a~famous problem posed by
Davenport.

\begin{problem}[Davenport, 1946, \protect{\cite{Davenport1946}}]
  Let $K\in\Kn$. Is it true that
\begin{equation}\label{eq:conjecture}
    \lambda_1(K_s,\Lambda)\cdots\lambda_n(K_s,\Lambda)\vol(K)\leq
    \delta(K)\, 2^n \det(\Lambda)
    \,\hbox{\rm\Large?}
\end{equation}
\label{conj:davenport}
\end{problem}
Actually, Davenport formulated it for $K\in\cKn$ but this is
equivalent to the above statement.
So far it has only been verified for $n=2$ and for ellipsoids by
Minkowski (see~\cite[pp.~196]{Minkowski1896und1910},~\cite[pp.~195]{GruberLekkerkerker1987}),
the case $n=3$ has been settled by Woods~\cite{Woods1956}. For more
information see~\cite[Section 18]{GruberLekkerkerker1987}.

The inequalities in Theorem~\ref{eq:secondminimumgen} have the nice
properties that they generalize the symmetric setting
and they are invariant with respect to translations of
$K$. A different way to extend the class $\cKn$ is to consider
centered convex bodies $K\in\Kcn$.
For those bodies Ehrhart posed in 1964 the following conjecture as
an analog to Minkowski's convex body Theorem~\ref{thm:convexbodytheorem}.
\begin{conjecture}[Ehrhart, 1964,~\cite{Ehrhart1964}]
 Let $K\in\Kcn$
  with $\vol(K)\geq (n+1)^n /n!$. Then $K$ contains a~non-trivial
  lattice point, i.e., $K\cap\Z^n \wonull\ne\emptyset$.
\label{conj:ehrhart}
\end{conjecture}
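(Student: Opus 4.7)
My plan is to argue by contradiction: assume $K\in\Kcn$ has $\vol(K)\geq (n+1)^n/n!$ yet $K\cap\Z^n\wonull=\emptyset$. Since $\vnull\in\inte(K)$, this forces $\lambda_i(K)>1$ for every $1\leq i\leq n$ (using the extension of successive minima to the asymmetric setting from Section~\ref{sec:preliminaries}). The aim is to derive a contradiction by feeding this information into Minkowski's Theorem~\ref{thm:secondminimum} applied to an o-symmetric body constructed from $K$.

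The natural candidate is the symmetrization $K_s=\frac{1}{2}(K-K)\in\cKn$. The centering hypothesis enters through Minkowski's classical inclusion $-K\subseteq nK$ for convex bodies with centroid at the origin, which yields $K_s\subseteq\frac{n+1}{2}K$ and hence, by the definition of successive minima,
\[
\lambda_i(K_s)\geq \frac{2}{n+1}\,\lambda_i(K) \,>\, \frac{2}{n+1}, \quad 1\leq i\leq n.
\]
Feeding this into the upper bound in Theorem~\ref{thm:secondminimum} applied to $K_s$ produces $\vol(K_s)<(n+1)^n$, and combining with the Brunn-Minkowski inequality $\vol(K)\leq\vol(K_s)$ yields only $\vol(K)<(n+1)^n$.

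The main obstacle is the factor $1/n!$ separating this bound from the conjectured one. Replacing $K_s$ by the shrinking symmetrization $K\cap(-K)$, whose successive minima satisfy $\lambda_i(K\cap(-K))\geq\lambda_i(K)$, and invoking a Milman--Pajor/Rogers--Shephard type comparison of $\vol(K)$ to $\vol(K\cap(-K))$ does not close the gap either, as the best available such inequalities fall short by an exponential factor. A more promising route, and the one I would pursue, is an affine symmetrization argument that preserves both the centroid condition and the absence of non-trivial interior lattice points while not decreasing volume, aiming to reduce to the extremal centered simplex $\conv\{-\mathbf{1},(n+1)\ve_1-\mathbf{1},\ldots,(n+1)\ve_n-\mathbf{1}\}$ of volume $(n+1)^n/n!$, for which the conjecture is known. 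The hard part -- almost certainly what keeps the conjecture open in general -- is devising a symmetrization compatible with the centroid constraint, since standard Steiner symmetrization does not preserve centeredness.
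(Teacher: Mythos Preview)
The statement is a \emph{conjecture}, not a theorem, and the paper does not supply a proof; it explicitly presents Ehrhart's conjecture as open in dimensions $n\geq 3$ and surveys the partial progress. So there is no ``paper's own proof'' to compare against, and you should not be attempting a full proof here.

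That said, your analysis is well aligned with the state of the art the paper describes. Your second approach via $K\cap(-K)$ is precisely the route the paper identifies as yielding the best known bounds: one bounds $\alpha(K)=\vol(K\cap(-K))/\vol(K)$ from below and applies Minkowski's Theorem~\ref{thm:convexbodytheorem} to the $o$-symmetric body $K\cap(-K)$. The paper records the Milman--Pajor bound $\alpha(K)\geq 2^{-n}$, the Huang--Slomka--Tkocz--Vritsiou improvement $2^{-n}\mathrm{e}^{c\sqrt{n}}$, and the Campos--van Hintum--Morris--Tiba bound $2^{-n}\mathrm{e}^{c\,n/L_n^2}$, all of which---as you correctly observe---fall short of the conjectured $(n+1)^n/n!\sim \mathrm{e}^n$ by an exponential factor. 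Your first approach via $K_s$ and the inclusion $-K\subseteq nK$ is weaker still. Your final suggestion of a centroid-preserving symmetrization reducing to the simplex is a natural thought, but the paper gives no indication that such a scheme exists, and you rightly flag this as the obstruction. In short: your proposal does not prove the conjecture, but neither does the paper---and your diagnosis of where the difficulty lies matches the survey's account.
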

Moreover, he conjectured that the bound is best possible only (up to
$\Z^n$ preserving linear transformations) for the
simplex
\begin{equation}
  T_n:=-\sum_{i=1}^n \ve_i +(n+1)\conv\{\vnull, \ve_1, \dotsc, \ve_n\}.
\label{eq:centeredsimplex}
\end{equation}
Ehrhart proved his conjecture in various special cases, e.g., in the
plane~\cite{Ehrhart1955a} and for
simplices~\cite{Ehrhart1979a}. Berman and Berndtsson~\cite{BermanBerndtsson2017} proved it for a~special class
of $n$-dimensional lattice polytopes including so called reflexive polytopes (see also~\cite{NillPaffenholz2014}). The best known bound for which the conclusion of
Conjecture~\ref{conj:ehrhart} holds true is based on a~lower bound for
the ratio
\begin{equation*}
         \alpha(K):=\frac{\vol(K\cap(-K))}{\vol(K)}, \quad K\in\Kcn,
\end{equation*}
since then $\vol(K)\geq \alpha(K)^{-1} 2^n$ implies by Minkowski's
Theorem~\ref{thm:convexbodytheorem} that $K\cap(-K)$ and thus $K$ contains a~non-trivial
lattice point.

Milman and Pajor~\cite{MilmanPajor2000} proved $\alpha(K)\geq
2^{-n}$, this was improved by Huang et
al.~\cite{HuangSlomkaTkoczEtAl2022} to
$2^{-n}\mathrm{e}^{c\sqrt{n}}$ where $c$ is an absolute constant, and
recently it was shown by Campos et al.~\cite[Theorem 4.1]{CamposHintumMorrisEtAl2022} that
\begin{equation*}
  \alpha(K)\geq 2^{-n}\,\mathrm{e}^{c\,n/L_n^2},
\end{equation*}
where $L_n$ is the so called isotropic constant (see, e.g.,~\cite{BrazitikosGiannopoulosValettasEtAl2014}). Together
with the very recently announced upper bound of $O(\sqrt{\log(n)})$
onto $L_n$ by Klartag~\cite{Klartag2023}, the result of Campos et al.~shows
that $K\in\Kcn$
contains a~non-trivial lattice point if $\vol(K)\geq 4^n
\mathrm{e}^{-c\,n/\log(n)^2} $ (cf.~\cite[Theorem 1.4]{CamposHintumMorrisEtAl2022}); observe that $(n+1)^n /n! \sim \mathrm{e}^n$
and it is believed that $\alpha(K)$ is
minimized for a~simplex which would give ``almost'' a~bound of $\mathrm{e}^n$.

In view of Minkowski's successive minima it is also tempting to
consider the following generalization of Ehrhart's conjecture (see~\cite{HenkHenzeHernandezCifre2016}).
\begin{problem}
Let $K\in \Kcn$ and $\Lambda\in\Lat^n$. Is it true
  that
  \begin{equation*}
     \lambda_1(K,\Lambda)\cdots\lambda_n(K,\Lambda)\vol(K)\leq \frac{(n+1)^n}{n!}\det(\Lambda) \,\hbox{\rm\Large?}
  \end{equation*}
\end{problem}
By the same reasoning as above and the monotonicity of the successive
minima such an inequality exists with $4^n
\mathrm{e}^{-c\sqrt{n}}$ instead of $(n+1)^n /n!$, and in~\cite{HenkHenzeHernandezCifre2016} it is
also verified in some special cases, e.g., in the plane. Moreover, in
analogy to Minkowski's lower bound in Theorem~\ref{thm:secondminimum}, the
authors of \cite{HenkHenzeHernandezCifre2016} prove that for $K\in\Kcn$,
\begin{equation*}
           \frac{n+1}{n!}\det(\Lambda)\leq \lambda_1(K,\Lambda)\cdots\lambda_n(K,\Lambda)\vol(K)
\end{equation*}
along with a~characterization of the equality case. In particular, if
$\Lambda=\Z^n$,
equality is attained for
\begin{equation}
            \dual{T_n}=\conv\{-(\ve_1+\cdots +\ve_n),
            \ve_1,\dots,\ve_n \}.
\label{eq:dualtn}
\end{equation}

As the volume is a~particular intrinsic volume $\V_i(K)$ (see~\eqref{eq:intrinsicvolumes}) one may also
ask for inequalities relating these functionals to the successive
minima. The $\V_i(K)$, however, are not
invariant under linear transformations of determinant $1$ and so
results for $\Z^n$ cannot be equivalently formulated for arbitrary
lattices as in the case of the volume inequalities presented so far.

As the $i$-th intrinsic volume is not smaller than the volume of any
intersection of $K$ with an $i$-dimensional plane, the lower bound in Theorem~\ref{thm:secondminimumgen} implies for $K\in\Kn$, $\dim K=n$, and $\Lambda=\Z^n$
(see~\cite{Wills1990b})
\begin{equation}
         \frac{2^i}{i!}\leq \lambda_1(K_s)\cdots\lambda_i(K_s)
         \V_i(K),\quad 1\leq i\leq n.
\label{eq:wills}
\end{equation}
An $o$-symmetric convex body without non-trivial lattice
points, but with a~large $i$-di\-men\-sion\-al section shows there is no
upper bound on the right hand side for $i\leq n-1$. On the other hand,
it was shown in~\cite{Henk1990} that for $K\in\Kn$, $\dim K=n$,
\begin{equation}
       \lambda_{i+1}(K_s)\cdots\lambda_n(K_s)
       < 2^{n-i} \frac{\V_i(K)}{\vol(K)},\quad 1\leq i\leq n-1,
\label{eq:henk}
\end{equation}
and that this inequality is best possible. Actually, both inequalities~\eqref{eq:wills} and~\eqref{eq:henk} were originally proved for
$o$-symmetric convex bodies, but the proofs also work in this slightly
more general setting. We also remark, that in general the ratio $\V_i(K)/\vol(K)$
 in~\eqref{eq:henk} is not bounded from above in terms of the
 successive minima. To generalize~\eqref{eq:henk} to the setting of arbitrary lattices is still an open
problem. 
\begin{conjecture}[Schnell, 1995,~\cite{Schnell1995a}]
 Let $K\in\Kn$,
  $\dim K=n$, and let $\Lambda\in\Lat^n$. Then
\begin{equation*}
       \lambda_{i+1}(K_s,\Lambda)\cdots\lambda_n(K_s,\Lambda)
       < 2^{n-i} \frac{\V_i(K)}{\vol(K)} \frac{\det(\Lambda)}{\det_i(\Lambda)},\quad 1\leq i\leq n-1,
\end{equation*}
where $\det_i(\Lambda)$ is the smallest determinant of an $i$-dimensional
sublattice of $\Lambda$.
\end{conjecture}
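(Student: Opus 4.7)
My plan is to extend the proof of \eqref{eq:henk} from the $\Z^n$-case treated in~\cite{Henk1990} to a general lattice $\Lambda$, by combining an orthogonal projection argument with the Fubini-type volume estimate that underlies \eqref{eq:henk}.

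Choose an $i$-dimensional sublattice $\Lambda^* \subseteq \Lambda$ attaining $\det(\Lambda^*) = \det_i(\Lambda)$, set $L := \lin(\Lambda^*)$, and let $\pi \colon \R^n \to L^\perp$ denote the orthogonal projection. Then $\pi(\Lambda)$ is a rank-$(n-i)$ lattice in $L^\perp$ with $\det(\pi(\Lambda)) = \det(\Lambda)/\det_i(\Lambda)$, and $\pi(K_s)$ is centrally symmetric in $L^\perp$. The key geometric input is the bound
\[
\vol_{n-i}(\pi(K_s)) \geq \vol_{n-i}(\pi(K)) \geq \frac{\vol(K)}{\V_i(K)},
\]
where the first inequality is Brunn--Minkowski applied to the projection, and the second is strict when $\dim K = n > i$. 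The second inequality comes from Fubini,
\[
\vol(K) = \int_{\pi(K)} \vol_i\bigl(K \cap (L + \vx)\bigr)\, d\vx,
\]
combined with the monotonicity of intrinsic volumes: each slice $K \cap (L + \vx)$ has dimension at most $i$, so $\vol_i(K \cap (L + \vx)) = \V_i(K \cap (L + \vx)) \leq \V_i(K)$, and the inequality is strict for $\vx$ in the interior of $\pi(K)$ since $K \cap (L + \vx)$ is then a proper convex subset of the $n$-dimensional body $K$. Applying the upper bound in Theorem~\ref{thm:secondminimumgen} to the symmetric pair $(\pi(K_s), \pi(\Lambda))$ in $L^\perp$ and inserting the above volume estimate then yields
\[
\prod_{k=1}^{n-i} \lambda_k(\pi(K_s), \pi(\Lambda)) \;<\; 2^{n-i}\,\frac{\V_i(K)}{\vol(K)}\,\frac{\det(\Lambda)}{\det_i(\Lambda)}.
\]

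The hard part would then be to establish
\[
\prod_{j=i+1}^n \lambda_j(K_s, \Lambda) \;\leq\; \prod_{k=1}^{n-i} \lambda_k(\pi(K_s), \pi(\Lambda)).
\]
Projecting linearly independent lattice points $\vv_1,\ldots,\vv_n \in \Lambda$ that realize the successive minima of $(K_s, \Lambda)$ only yields the reverse inequality $\lambda_k(\pi(K_s), \pi(\Lambda)) \leq \lambda_{i+k}(K_s, \Lambda)$, because short vectors of $\pi(\Lambda)$ need not lift to comparably short vectors of $\Lambda$. Closing this gap appears to require either a careful joint choice of $\Lambda^*$ together with a basis realizing the successive minima of $(K_s, \Lambda)$ so that the lifts are automatically short, or a transference-type estimate bounding the cost of lifting from $\pi(\Lambda)$ back to $\Lambda$. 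Overcoming this obstruction is presumably why Schnell's conjecture is genuinely deeper than the $\Z^n$-case and has remained open.
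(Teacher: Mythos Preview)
The statement you are addressing is presented in the paper as an open \emph{conjecture}; the paper gives no proof, and explicitly says just before stating it that ``To generalize~\eqref{eq:henk} to the setting of arbitrary lattices is still an open problem.'' So there is no proof in the paper to compare your attempt against.

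Your write-up is consistent with this: you do not claim a proof, and you correctly isolate the obstruction. The first half of your argument is sound. The minimal-determinant sublattice $\Lambda^*$ is automatically primitive (otherwise $\Lambda\cap L$ would have strictly smaller determinant), so $\det(\pi(\Lambda))=\det(\Lambda)/\det_i(\Lambda)$ holds; the Fubini/monotonicity step $\vol(K)\le \V_i(K)\vol_{n-i}(\pi(K))$ is exactly the mechanism behind~\eqref{eq:henk}; and Minkowski's second theorem applied in $L^\perp$ then gives the desired bound on $\prod_k \lambda_k(\pi(K_s),\pi(\Lambda))$. The genuine gap is precisely the one you name: projection yields $\lambda_k(\pi(K_s),\pi(\Lambda))\le \lambda_{i+k}(K_s,\Lambda)$, which is the reverse of what would be needed, and there is no general transference principle that repairs this. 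In the $\Z^n$-case of~\eqref{eq:henk} one can choose $L$ to be a coordinate subspace adapted to the first $i$ successive-minima directions, which is what makes that proof go through; for a general lattice no such alignment is available, and this is indeed the content of Schnell's conjecture.
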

Schnell made this conjecture only for $K\in\cKn$, but in view of~\eqref{eq:henk} it is quite plausible that it holds true for any
$K\in\Kn$.

The case $i=n-1$ in~\eqref{eq:henk} gives the particularly nice
inequality
\begin{equation}
  \lambda_n(K_s)<\frac{\sur(K)}{\vol(K)},
\label{eq:survol}
\end{equation}
where $\sur(K)$ is the surface area of $K$ (cf.~\eqref{eq:intrinsicvolumes}). In~\eqref{eq:discretevolsur} we will also see a~kind of discrete counterpart
to~\eqref{eq:survol}.

For $K\in \cKn$ and $i=n-1$,~\eqref{eq:wills} was improved in~\cite{HenkHenzeHernandezCifre2016} to the tight inequality
\begin{equation*}
   \frac{2^n}{(n-1)!}\leq \left(\sum_{i=1}^n \prod_{j=1,\, j\ne i}^n \lambda_j(K)^2 \right)^\frac{1}{2} \sur(K).
\end{equation*}
A corresponding best possible inequality for $K\in\Kn$, or with respect to arbitrary
lattices or for other intrinsic volumes is not known (see the
discussion in~\cite{HenkHenzeHernandezCifre2016}).

\section{Successive Minima and polarity}
In the context of so-called transference theorems in number theory the
goal is (roughly speaking) to establish relations between
integral solutions of different linear Diophantine approximation
problems (see, e.g.,~\cite[Section 45]{GruberLekkerkerker1987},~\cite[Section 23.2]{Gruber2007}).
From a~geometric point of view, this means
to relate functionals such as volume or successive minima of $K$
and $\dual{K}$. The study of this fruitful interplay
  goes back to Kurt Mahler in 1930s (see, e.g,~\cite{Evertse2019}),
and in~\cite{Mahler1939} he studied for $K\in\cKn$ the linearly
invariant volume product
$\vol(K)\vol(\dual{K})$, today also known as Mahler volume.
\begin{conjecture}[Mahler, 1939,~\cite{Mahler1939}]
 Let
  $K\in\cKn$. Then
\begin{equation}
   \frac{4^n}{n!}=\vol(C_n)\vol(\dual{C_n})\leq \vol(K)\vol(\dual{K}).
 \end{equation}
 \label{conj:mahler}
\end{conjecture}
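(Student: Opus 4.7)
Since Conjecture~\ref{conj:mahler} is one of the most famous open problems in asymptotic convex geometry, any plan must face head on the failure of the standard reduction tools for the Mahler volume $\vol(K)\vol(\dual{K})$. With that caveat, my plan is to attempt a symmetrization-based reduction to a class of bodies where the sharp bound can be verified directly.

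The target class is that of $1$-unconditional convex bodies, i.e., those $K\in\cKn$ invariant under all sign changes of coordinates $(x_1,\dots,x_n)\mapsto (\pm x_1,\dots,\pm x_n)$. Within this class the polar $\dual{K}$ is again $1$-unconditional and the inequality $\vol(K)\vol(\dual{K})\geq 4^n/n!$ can be verified by an inductive argument on the dimension, exploiting the product structure induced by the coordinate hyperplanes and the trivial one-dimensional identity $\vol([-a,a])\,\vol(\dual{[-a,a]})=4$. Granted the unconditional case, the main work is to construct, for each $K\in\cKn$, a finite sequence of symmetrizations $K=K_0\mapsto K_1\mapsto\dots\mapsto K_N$ terminating in a $1$-unconditional body and satisfying $\vol(K_{j+1})\vol(\dual{K_{j+1}})\leq \vol(K_j)\vol(\dual{K_j})$ at every step. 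Natural candidates are Steiner symmetrization with respect to a coordinate hyperplane, or a shadow system of Campi--Gronchi type, along which $\vol(K_t)$ and $\vol(\dual{K_t})$ vary in a structured, log-concave manner.

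The main obstacle is precisely the absence of such a symmetrization: Steiner symmetrization preserves $\vol(K)$ but can \emph{increase} $\vol(\dual{K})$, hence is not monotone on the Mahler volume, and no coordinate-adapted substitute is known to be simultaneously monotone and convergent to the unconditional class for arbitrary $K\in\cKn$. This is the essential reason Conjecture~\ref{conj:mahler} has resisted proof since 1939, being confirmed only in dimensions up to three, for $1$-unconditional bodies, for zonoids, and in a few other special classes; the best general result remains an isomorphic lower bound $c^n/n!$. A successful plan therefore requires either the discovery of a polarity-adapted symmetrization with the two competing properties above, which would be a breakthrough in its own right, or a fundamentally different route, for instance the complex-geometric methods developed in the wake of Nazarov's proof of the isomorphic bound, or the $\GL_n$-equivariant transference theory for the successive minima discussed in this section, where one would rather aim at an aggregate inequality of the form $\prod_{i=1}^n \lambda_i(K)\lambda_i(\dual{K})\leq n!$ and couple it to Theorem~\ref{thm:secondminimum} applied to both $K$ and $\dual{K}$.
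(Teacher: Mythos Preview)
The statement you are addressing is Mahler's Conjecture, and the paper presents it precisely as that: an open conjecture, not a theorem. The paper offers no proof whatsoever; it merely records the known status (verified in dimensions $2$ and $3$, open for $n\geq 4$) and the best available bounds~\eqref{eq:mahler_volume}. So there is no ``paper's own proof'' to compare against.

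Your proposal is not a proof either, and you are forthright about this. What you have written is an honest survey of one natural line of attack (reduce to $1$-unconditional bodies via a polarity-monotone symmetrization) together with a clear statement of why it is blocked: no symmetrization is known that simultaneously decreases the Mahler volume and drives a general $K\in\cKn$ into the unconditional class. That diagnosis is accurate and matches the consensus in the field. Your final paragraph's suggestion of aiming at $\prod_{i=1}^n \lambda_i(K)\lambda_i(\dual{K})\leq n!$ is thematically aligned with the transference material in this section, but note that such an inequality combined with Theorem~\ref{thm:secondminimum} for $K$ and $\dual{K}$ would only yield $\vol(K)\vol(\dual{K})\geq 4^n/(n!)^2\cdot\frac{1}{n!}\cdot(n!)^2=4^n/n!$ after a somewhat delicate cancellation, and in fact the product of all $\lambda_i(K)\lambda_i(\dual{K})$ is bounded below by $1$ (Mahler) but the desired upper bound of $n!$ is itself equivalent in strength to what you are trying to prove, so this route is circular rather than a genuine alternative.

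In summary: there is no gap to name because there is no claimed proof, and the paper is in the same position.
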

Mahler~\cite{Mahler1938} verified the conjecture in dimension 2, and
it was also recently proved in dimension 3~\cite{IriyehShibata2020}. He also proved a~lower bound of $4^n /(n!)^2$
and an upper bound on the product of $4^n$. The best known general
bounds are
\begin{equation}
  \frac{\pi^n}{n!}\leq\vol(K)\vol(\dual{K})\leq \omega_n^2.
\label{eq:mahler_volume}
\end{equation}
The apparently best possible upper bound, which was also
conjectured by Mahler~\cite{Mahler1939}, is known as the Blaschke-Santal\'o Theorem~\cite{Santalo1949}. The lower bound is due to
Kuperberg~\cite{Kuperberg2008}. In the general case, i.e., for $K\in\Kn$ it is conjectured, and also
attributed to Mahler, that
\begin{equation}
  \frac{(n+1)^{n+1}}{(n!)^2}= \vol(S_n)\vol(\dual{S_n})\leq \vol(K)\vol(\dual{K}),
\label{eq:mahler2}
\end{equation}
where $S_n$ is any simplex with the centroid at the origin, e.g.,
$T_n$ from~\eqref{eq:centeredsimplex}. This is only known to be true
in the plane~\cite{Mahler1938}. For more information on the Mahler
volume and its central role within Convex Geometry we refer to~\cite{FradeliziMeyerZvavitch2023}.

Combining the upper bound in Minkowski's inequality~\eqref{eq:secondminimumgen} for
$\dual{K}$ with the conjectured
lower bound in Mahler's Conjecture~\eqref{conj:mahler} leads for $K\in\cKn$ to
the following conjectural inequality, which is also due to Mahler.
\begin{conjecture}[Mahler, 1974,~\cite{Mahler1974}]
 Let
  $K\in\cKn$ and $\Lambda\in\Lat^n$. Then
\begin{equation}
  \frac{2^n}{n!}\det(\Lambda) \,\,\lambda_1(\dual{K},\dual{\Lambda})\cdots \lambda_n(\dual{K},\dual{\Lambda})
  \leq \vol(K).
\label{conj:mahler_2}
\end{equation}
\end{conjecture}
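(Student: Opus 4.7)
The plan is to derive inequality~\eqref{conj:mahler_2} by combining the upper bound in Minkowski's theorem on successive minima, applied to the polar body and the polar lattice, with the volume-product bound of Mahler's Conjecture~\ref{conj:mahler}. Since $\dual{K}\in\cKn$ and $\det(\dual{\Lambda})=1/\det(\Lambda)$, inequality~\eqref{eq:secondminimumgenlat} applied to $(\dual{K},\dual{\Lambda})$ reads
\begin{equation*}
\lambda_1(\dual{K},\dual{\Lambda})\cdots\lambda_n(\dual{K},\dual{\Lambda})\,\vol(\dual{K}) \leq \frac{2^n}{\det(\Lambda)}.
\end{equation*}
Multiplying this inequality by $\frac{2^n}{n!}\det(\Lambda)$ and dividing by $\vol(\dual{K})$ rearranges it to
\begin{equation*}
\frac{2^n}{n!}\det(\Lambda)\,\lambda_1(\dual{K},\dual{\Lambda})\cdots\lambda_n(\dual{K},\dual{\Lambda}) \leq \frac{4^n}{n!\,\vol(\dual{K})}.
\end{equation*}

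Consequently, the conjectured inequality~\eqref{conj:mahler_2} is equivalent to the volume-product bound $\vol(K)\vol(\dual{K}) \geq 4^n/n!$, which is precisely the content of Mahler's Conjecture~\ref{conj:mahler}. In this sense the proposed proof is a single clean reduction: no further successive-minima input is required, and the sole role of Minkowski's theorem is to convert a lower bound on $\vol(\dual{K})$ into an upper bound on the product $\lambda_1(\dual{K},\dual{\Lambda})\cdots\lambda_n(\dual{K},\dual{\Lambda})$.

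The hard part is therefore Mahler's Conjecture itself, which remains open for $n\geq 4$. In the dimensions $n=2$~\cite{Mahler1938} and $n=3$~\cite{IriyehShibata2020} where the bound $4^n/n!$ has been established, as well as for special classes (e.g.\ ellipsoids, zonoids, unconditional bodies) in which it is known to hold, the reduction above yields the claim unconditionally. Substituting instead Kuperberg's general bound $\vol(K)\vol(\dual{K})\geq \pi^n/n!$ from~\eqref{eq:mahler_volume}, the same argument gives only the weakened estimate
\begin{equation*}
\frac{2^n}{n!}\det(\Lambda)\,\lambda_1(\dual{K},\dual{\Lambda})\cdots\lambda_n(\dual{K},\dual{\Lambda}) \leq \left(\frac{4}{\pi}\right)^n\vol(K),
\end{equation*}
which overshoots the conjectured constant by a factor of $(4/\pi)^n$ and is the best the present approach can achieve without further input on the Mahler volume.
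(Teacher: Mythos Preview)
Your reduction is exactly the one the paper uses to motivate the conjecture: apply the upper bound in Minkowski's theorem~\eqref{eq:secondminimumgenlat} to $(\dual{K},\dual{\Lambda})$ and combine it with the conjectured lower bound $\vol(K)\vol(\dual{K})\geq 4^n/n!$ from Conjecture~\ref{conj:mahler}. The paper states this derivation in the sentence immediately preceding the conjecture and likewise observes that the cases $n=2,3$ follow from the known instances of the volume-product bound. So your approach and the paper's coincide.

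One small correction: your claim that \eqref{conj:mahler_2} is \emph{equivalent} to the Mahler volume bound overstates what the argument shows. The chain of inequalities only proves that Mahler's Conjecture~\ref{conj:mahler} \emph{implies} \eqref{conj:mahler_2}; the reverse implication would require, for each $K$, a lattice $\Lambda$ attaining equality in Minkowski's upper bound for $\dual{K}$, which is not available in general. The paper is careful on this point, saying only that combining the two ingredients ``leads to'' the conjectural inequality.
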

The inequality would be best possible, for instance, for the
cross-polytope $\dual{C_n}$, and the previously mentioned results on the volume product imply that
it is true for $n=2,3$. Even the weaker inequality (as an analogue to
Theorem~\ref{thm:firstminimum}),
\begin{equation}
   \frac{2^n}{n!}\det(\Lambda)\,\, \lambda_1(\dual{K},\dual{\Lambda})^n
   \leq \vol(K),
\label{eq:mahler_weaker}
\end{equation}
which has also been studied by Mahler, is open for $n\geq 4$.
For general convex bodies the same problem was studied by
Makai Jr.
\begin{conjecture}[Makai, Jr., 1978,~\cite{Makai1978}]
 Let $K\in\Kn$ and
  $\Lambda\in\Lat^n$. Then
  \begin{equation}
      \frac{n+1}{n!}\det(\Lambda)\,\,
      \lambda_1(\dual{K_s},\dual{\Lambda})^n \leq \vol(K).
 \label{eq:makai}
\end{equation}
\label{conj:makai}
\end{conjecture}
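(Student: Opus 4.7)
Since $K_s\in\cKn$, for any $v\in\R^n$ we have $v\in\lambda\,\dual{K_s}$ iff $\langle x,v\rangle\le\lambda$ for all $x\in K_s$, which, by the symmetry of $K_s$, is equivalent to $w(K_s,v)\le 2\lambda$. As $K_s=\tfrac{1}{2}(K-K)$ gives $w(K_s,v)=w(K,v)$, one obtains
\begin{equation*}
\lambda_1(\dual{K_s},\dual\Lambda)\,=\,\tfrac{1}{2}\min_{v\in\dual\Lambda\setminus\{\vnull\}}w(K,v),
\end{equation*}
so~\eqref{eq:makai} is the sharp lattice-width lower bound
\[
\vol(K)\,\ge\,\frac{n+1}{2^n n!}\,\det(\Lambda)\,\min_{v\in\dual\Lambda\setminus\{\vnull\}}w(K,v)^n.
\]
Its conjectured extremal is the centered simplex $\dual T_n$ from~\eqref{eq:dualtn} with $\Lambda=\Z^n$, for which $\vol(\dual T_n)=(n+1)/n!$ and $w(\dual T_n,\ve_i)=2$.

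\textbf{Plan A: Minkowski plus Mahler.} Applying Theorem~\ref{thm:firstminimum} to the $o$-symmetric body $\dual{K_s}$ with lattice $\dual\Lambda$ yields
\begin{equation*}
\lambda_1(\dual{K_s},\dual\Lambda)^n\,\vol(\dual{K_s})\,\le\,2^n/\det(\Lambda).
\end{equation*}
Assuming Mahler's Conjecture~\ref{conj:mahler} for $K_s$, i.e., $\vol(K_s)\vol(\dual{K_s})\ge 4^n/n!$, this gives
\begin{equation*}
\frac{n+1}{n!}\,\det(\Lambda)\,\lambda_1(\dual{K_s},\dual\Lambda)^n\,\le\,\frac{n+1}{2^n}\,\vol(K_s),
\end{equation*}
so~\eqref{eq:makai} reduces to the difference-body estimate $\vol(K_s)\le\tfrac{2^n}{n+1}\vol(K)$. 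Unconditionally, Kuperberg's lower bound in~\eqref{eq:mahler_volume} turns the requirement into $\vol(K_s)\le(\pi/2)^n\vol(K)/(n+1)$. Both targets are strictly stronger than the Rogers--Shephard inequality $\vol(K_s)\le\binom{2n}{n}\vol(K)/2^n$ for $n\ge 2$, and since the extremal of Rogers--Shephard is again the simplex, this route cannot be sharp at $\dual T_n$.

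\textbf{Plan B: slicing in the minimal lattice-width direction.} Pick a primitive $\vv\in\dual\Lambda$ with $w(K,\vv)=2\,\lambda_1(\dual{K_s},\dual\Lambda)$ and slice $K$ by the parallel hyperplanes $H_t=\{x:\langle x,\vv\rangle=t\}$. Since $\vv\in\dual\Lambda$, $\Lambda\subseteq\bigcup_{k\in\Z}H_k$, the sublattice $\Lambda\cap H_0$ has determinant $|\vv|\det(\Lambda)$, and $K$ meets at most $\lfloor w(K,\vv)\rfloor+1$ integer slices. Apply the inductive hypothesis~\eqref{eq:makai} to each non-empty $K\cap H_k$ in the $(n{-}1)$-dimensional lattice $\Lambda\cap H_0$, and recombine via Brunn's concavity principle and $\vol(K)=\int\vol_{n-1}(K\cap H_t)\,\mathrm{d}t$ to recover~\eqref{eq:makai} in dimension $n$. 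The simplex $\dual T_n$ is consistent with this induction on $n$.

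\textbf{Main obstacle.} The equality case of every auxiliary inequality invoked — Rogers--Shephard, the non-symmetric Mahler Conjecture~\eqref{eq:mahler2}, and the upper Brunn bound needed to recombine slice volumes — is a simplex, i.e., exactly the conjectural extremal of~\eqref{eq:makai}, so losses compound unavoidably in Plan A and must be balanced very delicately in Plan B. Moreover, the inductive step in Plan B applies~\eqref{eq:makai} to $K\cap H_k$, whose $\lambda_1$ feature $(K\cap H_k)_s$ rather than a slice of $K_s$, and these two $o$-symmetric bodies need not coincide. A concrete first target would be to settle~\eqref{eq:makai} in dimension $n=3$, where Mahler's Conjecture~\ref{conj:mahler} is now known, and to track how the equality case at $\dual T_3$ propagates through the Minkowski--Mahler chain; this should suggest the sharp ``difference-body-in-slices'' inequality needed to complete the induction in arbitrary dimension.
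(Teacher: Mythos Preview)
The statement is a \emph{conjecture}, open for $n\ge 3$; the paper does not contain a proof, only the remark that the case $n=2$ is due to Fejes~T\'oth and Makai~Jr., and that the stronger product version~\eqref{eq:makai_2} follows in the plane from Minkowski's upper bound together with Eggleston's inequality~\eqref{eq:eggelston}. So there is no ``paper's proof'' to compare against, and you were right not to claim a complete argument.

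Your reformulation via the lattice width is correct and matches the paper's discussion around~\eqref{eq:lattice_width}. Your Plan~A, however, is aimed at the wrong auxiliary inequality. Applying symmetric Mahler to $K_s$ produces a lower bound for $\vol(K_s)\vol(\dual{K_s})$, which forces you into the hopeless difference-body estimate $\vol(K_s)\le \tfrac{2^n}{n+1}\vol(K)$; this is false already for the simplex (Rogers--Shephard is sharp there with ratio $\binom{2n}{n}/2^n>2^n/(n+1)$ for $n\ge 2$). The route the paper actually takes in dimension~2 bypasses $K_s$ entirely on the volume side: combine Minkowski's upper bound for $(\dual{K_s},\dual\Lambda)$ with a direct lower bound on $\vol(K)\vol(\dual{K_s})$. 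In the plane this is Eggleston's $\vol(K)\vol(\dual{K_s})\ge 6$, and~\eqref{eq:makai} (indeed the stronger~\eqref{eq:makai_2}) drops out with no loss. The natural higher-dimensional analogue would be $\vol(K)\vol(\dual{K_s})\ge 2^n(n+1)/n!$, tight at $\dual{T_n}$; this is not the symmetric Mahler conjecture nor the non-symmetric one~\eqref{eq:mahler2}, and to my knowledge is itself open for $n\ge 3$. So the honest conclusion is that Plan~A, properly redirected, reduces Makai's conjecture to an Eggleston-type volume-product inequality rather than to Mahler plus a difference-body bound.

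Your Plan~B and the obstacle analysis are reasonable, and your observation that $(K\cap H_k)_s$ differs from $K_s\cap(H_k-H_k)$ is exactly the kind of mismatch that blocks a naive induction.
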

It was shown to be true for $n=2$ by L. Fejes T\'oth and Makai, Jr.~\cite{FejesTothMakai1974} (see also~\cite{MerinoSchymura2017} for applications). In view of~\eqref{conj:mahler_2}
one might even conjecture the stronger inequality
\begin{equation}
   \frac{n+1}{n!}\det(\Lambda)
   \lambda_1(\dual{K_s},\dual{\Lambda})\cdots
   \lambda_n(\dual{K_s},\dual{\Lambda})\leq \vol(K),
\label{eq:makai_2}
\end{equation}
which would be also best possible as the simplex
$\dual{T_n}$ from~\eqref{eq:dualtn} shows. For $n=2$,~\eqref{eq:makai_2}
is an immediate consequence of the upper bound in~\eqref{eq:secondminimumgen} and Eggleston's~\cite{Eggleston1961} inequality
\begin{equation}
 6\leq \vol(K)\vol(\dual{K_s})
  \label{eq:eggelston}
\end{equation}
for planar convex bodies $K$. Actually, taking into account all
successive minima, there seems to be stronger lower bounds possible than the one
in~\eqref{eq:makai_2}. For the planar case see~\cite{HenkXue2019}. In contrast to Minkowski's Theorem~\ref{thm:secondminimumgen}, here
upper bounds on $\vol(K)$ in terms of
$\lambda_i(\dual{K},\dual{\Lambda})$ are the easy part. In~\cite{HenkXue2019} it was shown that for $K\in \Kn$
\begin{equation*}
   \vol(K)\leq 2^n \det(\Lambda)\lambda_1(\dual{K_s},\dual{\Lambda})\cdots\lambda_n(\dual{K_s},\dual{\Lambda}),
 \end{equation*}
and if $K \in\Kcn$ then
\begin{equation*}
   \vol(K)\leq \frac{(n+1)^n}{n!} \det(\Lambda)\lambda_1(\dual{K_s},\dual{\Lambda})\cdots\lambda_n(\dual{K_s},\dual{\Lambda}).
\end{equation*}
Both inequalities are best possible as the usual suspects show.

The
inequalities~\eqref{eq:mahler_weaker} and~\eqref{eq:makai} have a~nice
geometric interpretation in terms of the so called lattice width. For $K\in\Kn$ we
have by the definition of the polar set that
\begin{equation}
  \lambda_1(\dual{(K-K)},\dual{\Lambda})=\min_{\dual{\va}\in\dual{\Lambda}\wonull}\,\max_{\vy\in
    K-K}\ip{\dual{\va}}{\vy} =: \width(K,\Lambda),
\label{eq:lattice_width}
\end{equation}
and $\width(K,\Lambda)$ is the lattice width of $K$
with respect to $\Lambda$. It describes, roughly speaking, the minimal
number of parallel lattice hyperplanes of the lattice $\Lambda$
intersecting $K$; a~lattice hyperplane is a~hyperplane containing $n$ affinely independent lattice points of
$\Lambda$.
Hence,~\eqref{eq:mahler_weaker} and~\eqref{eq:makai} claim that the
volume of a
convex body of lattice width $2$ is at least the volume of $\dual{C_n}$
if $K$ is symmetric and, otherwise, at least the volume of $\dual{T_n}$.

An interesting weaker inequality than~\eqref{eq:makai} was
conjectured in the context of isosystolic inequalities for optical hypersurfaces.
\begin{conjecture}[\'Alvarez Paiva et al., 2016,~\cite{PaivaBalacheffTzanev2016}]
 Let $K\in\Knull$ and $\Lambda\in\Lat^n$. Then
\begin{equation}
   \frac{n+1}{n!}\det(\Lambda)\,\,
   \lambda_1(\dual{K},\dual{\Lambda})^n \leq \vol(K),
\label{conj:alvarez}
\end{equation}
with equality if and only if $K$ is a~simplex whose vertices are its
only non-trivial lattice points.
\end{conjecture}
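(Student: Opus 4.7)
The plan is to proceed as follows. By scaling $K$ by $1/\lambda_1(\dual{K},\dual{\Lambda})$ and by a unimodular change of basis mapping $\Lambda$ to $\Z^n$, one may assume $\det(\Lambda)=1$ and $\lambda_1(\dual{K},\Z^n)=1$. The conjecture then reduces to: if $\inte(\dual{K})\cap\Z^n=\{\vnull\}$, then $\vol(K)\geq (n+1)/n!$, with equality iff $K$ is a lattice simplex whose only non-trivial lattice points are its vertices.

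One conditional route uses the elementary comparison $\lambda_1(\dual{K},\Z^n)\leq\lambda_1(\dual{K_s},\Z^n)$, which follows from $h_{K_s}(\vu)=\tfrac{1}{2}(h_K(\vu)+h_K(-\vu))\geq\min(h_K(\vu),h_K(-\vu))\geq\lambda_1(\dual{K},\Z^n)$ for every $\vu\in\Z^n\wonull$. This shows that Makai's Conjecture~\ref{conj:makai} already implies~\eqref{conj:alvarez}, so any proof of Makai's inequality settles the matter. Since the upper bound in Theorem~\ref{thm:secondminimum} applied to the symmetric body $\dual{K_s}$ gives $\lambda_1(\dual{K_s},\Z^n)^n\vol(\dual{K_s})\leq 2^n$, one sees that~\eqref{conj:alvarez} would in turn follow from the ``generalised Eggleston'' bound $\vol(K)\vol(\dual{K_s})\geq (n+1)2^n/n!$, which is known in the plane via~\eqref{eq:eggelston} but open for $n\geq 3$.

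To attempt an unconditional proof, I would try to construct an inscribed simplex in $K$ directly. For each $\vu\in\Z^n\wonull$ with $h_K(\vu)=1$, pick a witness $\vx_{\vu}\in K$ lying on the supporting hyperplane $\{\vx:\ip{\vx}{\vu}=1\}$, and select $(n+1)$ such directions $\vu_0,\dots,\vu_n$ so that the simplex $S=\conv(\vx_{\vu_0},\dots,\vx_{\vu_n})\subseteq K$ has volume at least $(n+1)/n!$. In the extremal case $K$ would coincide with $S$ and the $\vx_i$'s would be the lattice vertices; the equality discussion would then be forced by the sharp-case analysis in each inequality of the construction. The principal obstacle is choosing the right $(n+1)$-tuple of directions from $\Z^n\cap\partial\dual{K}$: the facial structure of $\dual{K}$ can be combinatorially intricate, and extracting a sufficiently large simplex without invoking Makai's Conjecture~\ref{conj:makai} or a higher-dimensional Eggleston-type inequality appears to require genuinely new input, which is precisely why~\eqref{conj:alvarez} remains open in general.
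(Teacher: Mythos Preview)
The statement you were asked to prove is a \emph{conjecture}, not a theorem: the paper records it as open and supplies no proof. There is therefore nothing in the paper to compare your attempt against.

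Your proposal is not a proof either, and you say so yourself in the final sentence. What you have written is a correct summary of the conditional implications surrounding the conjecture. In particular, your argument that $\lambda_1(\dual{K},\dual{\Lambda})\le\lambda_1(\dual{K_s},\dual{\Lambda})$ via the support-function inequality $h_{K_s}(\vu)=\tfrac12(h_K(\vu)+h_K(-\vu))\ge\min(h_K(\vu),h_K(-\vu))$ is valid, and it confirms the paper's remark that~\eqref{conj:alvarez} is ``weaker'' than Makai's Conjecture~\ref{conj:makai}. Likewise, your reduction to a generalised Eggleston-type lower bound $\vol(K)\vol(\dual{K_s})\ge (n+1)2^n/n!$ is a legitimate route, and the planar case does follow from~\eqref{eq:eggelston} as you note.

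But none of this yields an unconditional proof: Makai's conjecture and the higher-dimensional Eggleston inequality are both open for $n\ge 3$, and your sketch of a direct inscribed-simplex argument in the last paragraph is only a heuristic with no mechanism for selecting the directions $\vu_0,\dots,\vu_n$. So the honest assessment is that you have correctly located the conjecture within the web of related open problems, but have not advanced beyond what the paper already states, namely that~\eqref{conj:alvarez} remains unproven.
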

It was pointed out in~\cite{HenkXue2019} that in general there is no upper bound
on $\vol(K)$ in this setting. For possible extensions of Makai's conjecture~\eqref{eq:makai} via
 covering minima (instead of the successive minima) we refer to Gonz\'alez Merino and Schymura~\cite{MerinoSchymura2017}.

Applying Minkowski's upper bound in~\eqref{eq:secondminimumgen} to
$K,\Lambda$ and $\dual{K},\dual{\Lambda}$ gives with~\eqref{eq:mahler_volume}
  \begin{equation*}
           \lambda_1(K,\Lambda)\lambda_1(\dual{K},\dual{\Lambda})\cdots
           \lambda_n(K,\Lambda)\lambda_n(\dual{K},\dual{\Lambda}) \leq \frac{4^n}{\pi^n}n!
  \end{equation*}
  and, in particular,
  \begin{equation*}
               \lambda_1(K,\Lambda)\lambda_1(\dual{K},\dual{\Lambda})\leq c\,n,
  \end{equation*}
where, in the following $c$ denotes an absolute constant which may
vary from line to line. This easily derived inequality is essentially best
possible, as Banaszczyk~\cite[Lemma 2]{Banaszczyk1996} proved
that in every dimension $n$ there exist a~lattice $\Lambda\in\Lat^n$ with
\begin{equation}
                  c\,n\leq \lambda_1(K,\Lambda)\lambda_1(\dual{K},\dual{\Lambda}).
\label{eq:bana_lower}
                \end{equation}
In the case $K=\ban$ this was shown earlier by Conway and Thompson~\cite[Chapter II, Theorem 9.5]{MilnorHusemoller1973}.

For products of the type
$\lambda_i(K,\Lambda)\lambda_j(\dual{K},\dual{\Lambda})$ one can only
expect a~non-trivial lower bound if $j\geq n+1-i$ and an upper bound
if $j\leq n+1-i$.

In the already mentioned paper~\cite{Mahler1939}, Mahler was the
first who studied the products
$\lambda_i(K,\Lambda)\lambda_{n+1-i}(\dual{K},\dual{\Lambda})$
and proved for $K\in\cKn$ the bounds
\begin{equation*}
             1\leq
             \lambda_i(K,\Lambda)\lambda_{n+1-i}(\dual{K},\dual{\Lambda})\leq n!.
\end{equation*}
The lower bound is clearly optimal, but the upper bound
has been improved considerably in the last decades. The currently best bounds are due to
Banaszczyk and are based on his groundbreaking Gaussian-like measures
on lattices introduced in~\cite{Banaszczyk1993a}. In~\cite{Banaszczyk1996} he proves for $K\in\cKn$
\begin{equation*}
    \lambda_i(K,\Lambda)\lambda_{n+1-i}(\dual{K},\dual{\Lambda})\leq
    c\,n (1+\log(n)),
  \end{equation*}
  which, in view of~\eqref{eq:bana_lower} is close to optimal. Moreover, he also
  shows that the $(1+\log(n))$ term can be improved for various classes
  of symmetric convex bodies. In particular, for $K=\ban$ it was
  already shown in~\cite{Banaszczyk1993a} that
  \begin{equation*}
            \lambda_i(\Lambda)\lambda_{n+1-i}(\dual{\Lambda})\leq n.
  \end{equation*}

If $K\in\Kn$ is lattice point free with respect to a~lattice
$\Lambda$. i.e., $\inte(K)\cap\Lambda=\emptyset$, then the covering
radius is at least 1, i.e.,
$\mu(K,\Lambda)\geq 1$. Hence, any upper bound on
$\mu(K,\Lambda)\lambda_1(\dual{K_s},\dual{\Lambda})$, $K\in\Kn$, is a~bound on the
so-called flatness constant $\flt(n)$, the maximal
lattice width of a lattice point free convex body in $\R^n$.

That
this quantity can be indeed bounded by a~constant only depending on
the dimension was first shown by Khinchin~\cite{Khinchin1948}.

For $K\in\cKn$, Banaszczyk~\cite{Banaszczyk1996} proved
\begin{equation}
               \mu(K,\Lambda)\lambda_1(\dual{K},\dual{\Lambda}) \leq c\,n\,\log(n)
\label{eq:flatsymmetric}
             \end{equation}
and so $\flt(n)\leq c\,n\log(n)$ for all convex bodies having a
center of symmetry. For general $K\in\Kn$, the following bound was
very recently announced by Reis and Rothvoss~\cite{ReisRothvoss2023}
(see also~\cite{Rudelson2000a},~\cite{BanaszczykLitvakPajorEtAl1999}
for the former best known bounds)
\begin{equation*}
  \flt(n)\leq c\,n\,\log(n)^8.
\end{equation*}
Actually, their main result implies the astonishing relation
\begin{equation*}
     \mu(K,\Lambda)\leq c\,\log(n)^7 \mu(K-K,\Lambda),
   \end{equation*}
which gives the bound on the flatness constant via the symmetric case~\eqref{eq:flatsymmetric}.  The best lower bounds on $\flt(n)$ are of order $n$ (see~\cite{BanaszczykLitvakPajorEtAl1999, MayrhoferSchadeWeltge2022} and the references within).

\section{Successive Minima and lattice point enumerator}

Since the successive minima measure or reflect
lattice point properties of a~convex body one may also
ask for direct relations between the successive minima
and the lattice point enumerator
$\LE(K,\Lambda):=\#(K\cap\Lambda)$; in the case $\Lambda=\Z^n$ we just
 write $\LE(K)$.
A result in this spirit is again due to Minkowski who proved
in analogy to his convex body Theorem~\ref{thm:convexbodytheorem}.
\begin{theorem}[Minkowski, 1896, \protect{\cite[p.~79]
{Minkowski1896und1910}}] Let
  $K\in\cKn$, $\dim K=n$ and $\LE(K)\geq 3^n +1$. Then
  $K$ contains a~non-trivial lattice point in its interior.
\end{theorem}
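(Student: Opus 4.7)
The plan is to mimic Minkowski's original pigeonhole-style proof of the volume-based Theorem~\ref{thm:convexbodytheorem}, but replacing the doubling trick by a residue-class argument modulo $3$. The constant $3^n$ in the hypothesis arises naturally as the cardinality of $\Z^n/(3\Z^n)$, which is the key pigeonhole counter.

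First I would invoke the pigeonhole principle: since $\LE(K)\ge 3^n+1$ strictly exceeds $|\Z^n/(3\Z^n)|=3^n$, there exist distinct lattice points $\va,\vb\in K\cap\Z^n$ with $\va\equiv\vb\pmod{3}$. The vector $\vz:=(\va-\vb)/3$ then lies in $\Z^n\setminus\{\vnull\}$, so it is a candidate non-trivial lattice point of $K$.

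Next I would verify $\vz\in\inte(K)$ by writing it as the three-term convex combination
\[
\vz \;=\; \tfrac{1}{3}\va+\tfrac{1}{3}(-\vb)+\tfrac{1}{3}\vnull.
\]
Here $\va\in K$, and $-\vb\in K$ by the symmetry $K=-K$. Moreover, $\dim K=n$ together with central symmetry forces $\vnull\in\inte(K)$: any interior point $\vx$ of $K$ has its reflection $-\vx$ also in $\inte(K)$, and their midpoint is $\vnull$. Since $\vz$ is a convex combination of two points of $K$ with a strictly positive weight on an interior point of $K$, the combination itself lies in $\inte(K)$; this is the standard fact that $\alpha\,\inte(K)+(1-\alpha)K\subset\inte(K)$ for $\alpha\in(0,1]$.

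The only conceptual obstacle is recognizing why one splits modulo $3$ rather than modulo $2$. A mod-$2$ pigeonhole would only produce $(\va-\vb)/2\in K$ by midpoint convexity, which fails to give openness, exactly mirroring the fact that Theorem~\ref{thm:convexbodytheorem} concludes only that $K$ contains a non-trivial lattice point, not necessarily in its interior. Splitting into three equal parts and reserving one of them for the interior point $\vnull$ creates precisely the slack needed to place $\vz$ strictly inside $K$, which is what forces the bound $3^n+1$ to be sharp (as witnessed, for instance, by $C_n=[-1,1]^n$, where $\LE(C_n)=3^n$ and $\inte(C_n)\cap\Z^n=\{\vnull\}$).
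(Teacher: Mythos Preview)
Your argument is correct. The paper itself does not supply a proof of this theorem: it is stated with attribution to Minkowski and the reader is referred to \cite[p.~63]{GruberLekkerkerker1987} for details. The pigeonhole argument modulo~$3$ that you give is precisely the classical proof found in those references, so there is nothing further to compare.
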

The cube $C_n$ shows that the bound cannot be improved in
general. Minkowski also proved a~sharper bound of $2^{n+1}-1$ for the
class of strictly $o$-symmetric convex bodies, but for simplification
we will deal only with the general case and we
refer to~\cite[p.~63]{GruberLekkerkerker1987} for
details, and to~\cite{GonzalezMerinoHenze2016} for an interesting generalization of these statements
of Minkowski.

Betke et al.~\cite{BetkeHenkWills1993} embedded the above result of Minkowski
in an inequality for $o$-symmetric convex bodies in the sense of
Theorem~\ref{thm:firstminimum} which was
later extended to $K\in \Kn$ by Malikiosis~\cite{Malikiosis2012}.
 Let $K\in\Kn$ and
  $\Lambda\in\Lat^n$. Then
\begin{equation*}
        \LE(K,\Lambda) \leq \left\lfloor\frac{2}{\lambda_1(K_s,\Lambda)}+1\right\rfloor^n.
\end{equation*}
And, obviously, the conjecture is that this can also be improved via
the product of all successive minima as in Minkowski's Theorem~\ref{thm:secondminimum} on
successive minima.
\begin{conjecture}
[Betke et al.\protect{\cite{BetkeHenkWills1993}}; Malikiosis
  \protect{\cite{Malikiosis2012}}] Let $K\in\Kn$ and
  $\Lambda\in\Lat^n$. Then
\begin{equation*}
        \LE(K,\Lambda) \leq
        \left\lfloor\frac{2}{\lambda_1(K_s,\Lambda)}+1\right\rfloor\cdots
        \left\lfloor\frac{2}{\lambda_n(K_s,\Lambda)}+1\right\rfloor.
      \end{equation*}
\label{conj:discreteMinkowski}
\end{conjecture}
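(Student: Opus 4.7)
The plan is to prove this conjecture by induction on $n$, generalizing the slicing argument of Betke, Henk, and Wills~\cite{BetkeHenkWills1993} from the symmetric case. The base case $n = 1$ is a direct count: for $K = [a,b]$ and $\Lambda = d\Z$, one has $\LE(K,\Lambda) \leq \lfloor (b-a)/d\rfloor + 1 = \lfloor 2/\lambda_1(K_s,\Lambda) + 1\rfloor$.

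For the inductive step, after a unimodular change of coordinates we may assume $\Lambda = \Z^n$. I would choose linearly independent lattice vectors $\vv_1,\ldots,\vv_n \in \Z^n$ with $\vv_i \in \lambda_i(K_s)K_s$, set $H := \lin(\vv_1,\ldots,\vv_{n-1})$, $\Lambda_{n-1} := H \cap \Z^n$, and let $L: \R^n \to \R$ be the primitive linear functional with kernel $H$ and $L(\Z^n) = \Z$. The lattice points of $K$ split along the integer values of $L$ into slices $K_k := K \cap L^{-1}(k)$, giving
\begin{equation*}
\LE(K,\Z^n) \leq N\cdot\max_k \LE(K_k,\Z^n),
\end{equation*}
where $N = \#(L(K)\cap\Z)$ is the number of nonempty slices.

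The per-slice bound works cleanly. Translating $K_k$ into $H$ by subtracting a lattice representative, the inductive hypothesis in dimension $n-1$ with the lattice $\Lambda_{n-1}$ yields $\LE(K_k,\Z^n) \leq \prod_{i=1}^{n-1}\lfloor 2/\lambda_i((K_k)_s,\Lambda_{n-1}) + 1\rfloor$. The comparison $\lambda_i((K_k)_s,\Lambda_{n-1}) \geq \lambda_i(K_s,\Z^n)$ for $i < n$ follows from two observations: (i) $(K_k)_s \subseteq K_s \cap H$, because $K_k - K_k \subseteq H$; and (ii) $\lambda_i(K_s \cap H, \Lambda_{n-1}) = \lambda_i(K_s, \Z^n)$ for $i < n$, with the $\geq$ direction using $\Lambda_{n-1} \subseteq \Z^n$ and the $\leq$ direction using that the chosen $\vv_i \in H$ realize both minima simultaneously. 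Combining, each slice contributes at most $\prod_{i=1}^{n-1}\lfloor 2/\lambda_i(K_s,\Z^n) + 1\rfloor$ lattice points, and multiplying by $N$ telescopes into the desired product provided $N \leq \lfloor 2/\lambda_n(K_s,\Z^n) + 1\rfloor$.

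The main obstacle is securing this last bound on $N$. It would follow immediately if $\vv_n$ could be arranged to satisfy $|L(\vv_n)| = 1$: combined with $\vv_n \in \lambda_n(K_s)K_s$, this yields $\max|L|(K_s) \leq 1/\lambda_n(K_s)$ and hence lattice width at most $2/\lambda_n(K_s)$ in direction $L$. However, although the norm-minimizer $\vv_n$ is necessarily primitive in $\Z^n$, it need not be primitive in the rank-one quotient $\Z^n/\Lambda_{n-1}$; when $|L(\vv_n)| \geq 2$ the naive width bound degrades by that factor. Overcoming this requires either varying the choice of $\vv_1,\ldots,\vv_{n-1}$ (and thereby $H$) so that some lattice vector of $K_s$-norm at most $\lambda_n(K_s)$ becomes primitive modulo $\Lambda_{n-1}$, or replacing the direct slicing bound by a Brunn--Minkowski-style comparison across slices. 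For symmetric $K$ a Minkowski-type basis reduction accomplishes this, but for general $K \in \Kn$ the absence of central symmetry between $K_k$ and $K_{-k}$ appears to block the corresponding adaptation; this is, in my view, the reason the conjecture remains open beyond the single-minimum bound of Malikiosis.
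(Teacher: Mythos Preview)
The statement you are attempting is Conjecture~\ref{conj:discreteMinkowski}, which the paper presents as \emph{open} for $n\geq 4$; there is no proof in the paper to compare against. Your write-up ultimately concedes this, but the route you sketch contains a genuine error, not merely the index obstruction you flag. The implication ``$|L(\vv_n)|=1$ together with $\vv_n\in\lambda_n(K_s)K_s$ yields $\max|L|(K_s)\le 1/\lambda_n(K_s)$'' is backwards: from $\vv_n\in\lambda_n K_s$ one only gets $\vv_n/\lambda_n\in K_s$, hence $\max|L|(K_s)\ge |L(\vv_n)|/\lambda_n=1/\lambda_n$. A two–dimensional instance already shows the claimed bound can fail: for $K=K_s=\{(x,y):|x|\le 1,\,|y-x|\le 1\}$ one has $\lambda_1=\lambda_2=1$; choosing $\vv_1=(1,2)\in K_s$ and $\vv_2=(0,1)\in K_s$ gives $H=\R(1,2)$, $L(x,y)=2x-y$, $|L(\vv_2)|=1$, yet $\max|L|(K_s)=2>1=1/\lambda_2$, and the number of slices is $5>\lfloor 2/\lambda_2+1\rfloor=3$. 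So even when $\vv_n$ is primitive modulo $\Lambda_{n-1}$, your slice count does not follow.

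Your per-slice analysis is fine: $(K_k)_s\subseteq K_s\cap H$ and $\lambda_i(K_s\cap H,\Lambda_{n-1})=\lambda_i(K_s,\Z^n)$ for $i<n$ are correct. But the assertion that ``for symmetric $K$ a Minkowski-type basis reduction accomplishes this'' is not right either: the conjecture is open already for $K\in\cKn$ when $n\ge 4$, and the difficulty has nothing to do with asymmetry of $K$ (all successive minima in the statement are those of the symmetric body $K_s$ anyway). The genuine obstruction is that no known choice of the hyperplane $H$ simultaneously controls the width of $K$ in the orthogonal lattice direction by $2/\lambda_n$; this is precisely why only partial results such as Malikiosis' bound~\eqref{eq:malikiosis} and Tointon's inequality are available.
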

The cube, or more generally, a~box $[-l_1,l_1]\times \cdots\times
[-l_n,l_n]$, $l_i\in\N$, shows that the bound would be tight.
Again Betke et al. just considered the symmetric case, in which they
also proved a~best possible lower bound~\cite[Corollary
2.1]{BetkeHenkWills1993} if $\lambda_n(K,\Lambda)\leq 2$:
\begin{equation*}
  \frac{1}{n!}\left(\frac{2}{\lambda_1(K,\Lambda)}-1\right)\cdots \left(\frac{2}{\lambda_n(K,\Lambda)}-1\right)\le\LE(K,\Lambda).
\end{equation*}
In~\cite{Malikiosis2012}, Malikiosis proved the Conjecture~\ref{conj:discreteMinkowski} for $n=3$, and in general he showed that
\begin{equation}
        \LE(K,\Lambda) \leq \frac{4}{\mathrm{e}}\sqrt{3}^{n-1}
        \left\lfloor\frac{2}{\lambda_1(K_s,\Lambda)}+1\right\rfloor\cdots
        \left\lfloor\frac{2}{\lambda_n(K_s,\Lambda)}+1\right\rfloor,
\label{eq:malikiosis}
      \end{equation}
where $\sqrt{3}$ can be replaced by $\sqrt[3]{40/9}$ if $K=-K$.
Moreover, in~\cite{Malikiosis2013} he
verified it for ellipsoids in every dimension.

Recently, Tointon~\cite{Tointon2023} presented a
  different type of upper bound on $\LE(K,\Lambda)$ in terms of the
  successive minima:
  \begin{equation*}
       \LE(K,\Lambda) \leq
       \left(1+\frac{\lambda_k(K,\Lambda)}{2}\right)\frac{2}{\lambda_1(K,\Lambda)}\cdots
       \frac{2}{\lambda_k(K,\Lambda)},
  \end{equation*}
where the index $k$ is chosen such that $k=\max\{j:
\lambda_j(K,\Lambda)\leq 2\}$ which can be replaced in the symmetric
setting by $k=\max\{j:
\lambda_j(K,\Lambda)\leq 1\}$. Tointon's inequality improves on~\eqref{eq:malikiosis} in the symmetric case, as well as in various
cases for general $K\in\Kn$. Moreover, it has, as well as Conjecture~\ref{conj:discreteMinkowski}, the nice and important
feature that it implies the continuous case, i.e., the upper bound in Minkowski's
Theorem~\ref{thm:secondminimumgen}. The reason is that, roughly speaking,
for ``fat'' convex bodies there is almost no difference between
$\vol(K)$ and $\LE(K)$, or, more precisely, the Jordan measurability
of convex bodies gives
\begin{equation}
       \lim_{\rho\to\infty} \frac{\vol(\rho\,K)}{\det(\Lambda)
         \LE(\rho\,K,\Lambda)}=1.
\label{eq:vollattice}
\end{equation}

In order to control the gap between $\vol(K)$ and $\LE(K)$ for
``thin'' convex bodies, Betke et al.~also started to study bounds on
$\LE(K)/\vol(K)$ in terms of the successive minima. And here the
following inequalities could be true.

\begin{conjecture}[Betke et al., 1993]
  Let $K\in\Kn$, $\dim(K)=n$ and $\Lambda\in\Lat^n$. Then
  \begin{equation*}
 \prod_{i=1}^n
 \left(1-i\frac{\lambda_i(K_s,\Lambda)}{2}\right) \leq
 \frac{\LE(K,\Lambda)}{\vol(K)}\det(\Lambda)\leq \prod_{i=1}^n \left(1+i\frac{\lambda_i(K_s,\Lambda)}{2}\right),
\end{equation*}
where for the lower bound $n\,\lambda_n(K_s,\Lambda)\geq 2$ is
assumed and $\LE(K)$ might be replaced by $\LE(\inte(K))$.
\label{conj:gv}
\end{conjecture}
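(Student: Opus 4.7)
The plan is to attack Conjecture~\ref{conj:gv} via an adapted-basis argument combined with iterative slicing. Both sides of the claimed inequalities are invariant under volume-preserving lattice isomorphisms, so I reduce at the outset to $\Lambda = \Z^n$. By the classical Mahler / van der Corput construction, there is a basis $\vb_1, \dots, \vb_n$ of $\Z^n$ with $\vb_i \in \max(1, i/2)\,\lambda_i(K_s)\,K_s$; an additional $\GL_n(\Z)$ change of coordinates lets me take $\vb_i = \ve_i$, so that the $i$-th coordinate axis lies in $\tfrac{\max(2,i)}{2}\lambda_i K_s$. Equivalently, the coordinate widths satisfy $w_i(K) \ge 4/(i\lambda_i)$ for $i \ge 2$ (and $w_1(K) \ge 2/\lambda_1$).

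For the upper bound, I would proceed by induction on $n$, slicing $K$ by the integer hyperplanes $\{x_n = m\}$ and writing
\begin{equation*}
\LE(K,\Z^n) = \sum_{m \in \Z} \LE\bigl(K \cap \{x_n = m\},\,\Z^{n-1}\bigr).
\end{equation*}
The inductive hypothesis bounds each slice count by $\vol(K_m)\,\prod_{i=1}^{n-1}\bigl(1 + i\lambda_i((K_m)_s)/2\bigr)$, where $K_m := K \cap \{x_n = m\}$. To match the product structure of the conjecture one needs $\lambda_i((K_m)_s) \le \lambda_i(K_s)$ for $i \le n-1$, which should follow from the fact that the first $n-1$ coordinate directions of the adapted basis descend to short lattice vectors in each slice. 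Converting the Riemann sum $\sum_m \vol(K_m)$ to the integral $\vol(K) = \int \vol(K_t)\, dt$ with the extra multiplicative factor $(1 + n\lambda_n/2)$ uses the Brunn--Minkowski concavity of $t \mapsto \vol(K_t)^{1/(n-1)}$ together with the width bound $w_n(K) \ge 4/(n\lambda_n)$. The lower bound is handled symmetrically under the standing hypothesis $n\lambda_n \ge 2$ --- which guarantees each factor $1 - i\lambda_i/2 \ge 0$ --- by iterating on $\inte(K)$ the one-dimensional lower estimate that an interval of length $L$ contains at least $L - 1$ integer points.

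The main obstacle is the Riemann-versus-integral step in the inductive argument. Extracting a multiplicative factor $(1 + n\lambda_n/2)$, rather than an additive boundary term, requires a delicate interplay between the concavity of the section-volume function and the adapted-basis width bound, and must reproduce the nontrivial factor of $n$ sitting inside $1 + n\lambda_n/2$. A secondary obstacle is that the adapted basis only approximately realizes the successive minima (up to the Mahler factor $\max(1, i/2)$), so the induction can accumulate a multiplicative loss unless the basis construction is further refined. A natural alternative is a global Minkowski-sum expansion of $\vol(K + P)$ for a symmetric fundamental parallelepiped $P$ adapted to the basis, reducing matters to a Steiner-type identity of the form $\vol(K + P) = \sum_I \vol_{n-|I|}(\pi_{I^c}(K))$; however, a naive subset-by-subset comparison of these terms with $\vol(K)\prod_{i \in I}(i\lambda_i/2)$ already fails for simple examples such as a thin bicone, so only joint control of the full Steiner-type sum has a chance to succeed. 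These are precisely the points at which the conjecture acquires its genuine depth.
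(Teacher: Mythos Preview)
The statement you are trying to prove is an \emph{open conjecture}: the paper gives no proof, and explicitly records that only partial results are known (the planar upper bound and a weakening with $n$ in place of $i$ in every factor, both due to Freyer and Lucas). Your write-up is candid about this---you flag the Riemann-sum--to--integral conversion and the Mahler basis loss as unresolved---so what you have is a programme, not a proof, and it cannot be compared with a proof in the paper because none exists.

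Beyond the obstacles you already name, one step in your induction fails outright. You need $\lambda_i\bigl((K_m)_s,\Z^{n-1}\bigr)\le \lambda_i(K_s,\Z^n)$ for the slices $K_m=K\cap\{x_n=m\}$, but the inequality goes the other way: from $K_m-K_m\subset (K-K)\cap\{x_n=0\}$ one gets $(K_m)_s\subset K_s\cap\{x_n=0\}$, hence $\lambda_i\bigl((K_m)_s,\Z^{n-1}\bigr)\ge \lambda_i\bigl(K_s\cap\{x_n=0\},\Z^{n-1}\bigr)\ge \lambda_i(K_s,\Z^n)$. The adapted basis only salvages $\lambda_i\bigl(K_s\cap\{x_n=0\}\bigr)\le \max(1,i/2)\,\lambda_i(K_s)$, which re-introduces exactly the multiplicative loss you were trying to avoid and is the reason the Freyer--Lucas argument lands at $n$ rather than $i$ in each factor. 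A smaller slip: the hypothesis ``$n\lambda_n\ge 2$'' does \emph{not} force $1-i\lambda_i/2\ge 0$; since $i\lambda_i$ is increasing in $i$, it makes the last factor $1-n\lambda_n/2\le 0$. The natural nontriviality assumption for the lower bound is $n\lambda_n\le 2$.
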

Actually, in~\cite[Conjecture 2.2]{BetkeHenkWills1993} Betke et
al. state only a~conjecture about a~corresponding lower bound for
symmetric convex bodies in which the $i$s in the factors of the
product are replaced by $1$, and they pose the problem to consider
also upper bounds.

The bounds in the Conjecture~\ref{conj:gv} would be tight as, e.g.,
positive integral multiples of the standard simplex
$\conv\{\vnull,\ve_1,\dots,\ve_n\}$ show (see~\cite{FreyerLucas2022}). Freyer and Lucas verified in~\cite{FreyerLucas2022}
the upper bound in Conjecture~\ref{conj:gv} in the plane and the lower
bound for planar lattice polytopes. In arbitrary dimensions they
proved the following weaker inequalities
 \begin{equation*}
 \prod_{i=1}^n
 \left(1-n\frac{\lambda_i(K_s,\Lambda)}{2}\right) \leq
 \frac{\LE(K,\Lambda)}{\vol(K)}\det(\Lambda)\leq \prod_{i=1}^n \left(1+n\frac{\lambda_i(K_s,\Lambda)}{2}\right).
\end{equation*}
Observe that the upper bound together with Minkowski's upper bound~\eqref{eq:secondminimumgen} give
\begin{equation*}
            \LE(K,\Lambda) \leq
            \left(\frac{2}{\lambda_1(K,\Lambda)}+n\right)\cdots \left(\frac{2}{\lambda_n(K,\Lambda)}+n\right),
\end{equation*}
which in turn via~\eqref{eq:vollattice} implies Minkowski's upper bound~\eqref{eq:secondminimumgen}~\cite{FreyerLucas2022}.

A different point of view on Conjecture~\ref{conj:discreteMinkowski} is
given by Ehrhart theory from Discrete Geometry.
By the monotonicity of the successive minima it suffices to prove the
conjecture instead of for $K\in\Kn$ for the associated lattice polytope
$P:=\conv(K\cap\Lambda)$. A polytope is called a~lattice polytope
(with respect to a~lattice $\Lambda$) if all its vertices are lattice
points of $\Lambda$. According to a~result due to Ehrhart~\cite{Ehrhart1962} we have for $k\in\N$
\begin{equation*}
   \LE(k\,P,\Lambda) =\sum_{i=0}^n \LE_i(P,\Lambda) k^i,
\end{equation*}
which is also known as the Ehrhart-polynomial. The coefficients
$\LE_i(P,\Lambda)$ have been the subject of intensive investigations over
the last decades (see~\cite{BeckRobins2015}) and (at least) three of them have a~very clear
geometric meaning
\begin{equation*}
      \LE_n(P,\Lambda)=\frac{\vol(P)}{\det(\Lambda)},\quad
      \LE_{n-1}(P,\Lambda)=\frac{1}{2}\sum_{i=1}^m
      \frac{\vol_{n-1}(F_i)}{\det(\aff(F_i)\cap\Lambda)},\quad \LE_0(P,\Lambda)=1,
\end{equation*}
where $F_1,\dots,F_m$ are the facets of $P$, and $\LE_0(P,\Lambda)$
corresponds to the Euler-characteristic of $P$.
Hence, one may also
ask for relations of the other coefficients (apart from the volume) to
the successive minima. In~\cite{HenkSchuermannWills2005} it was
shown for $o$-symmetric lattice polytopes
\begin{equation}\label{eq:discretevolsur}
   \frac{\LE_{n-1}(P,\Lambda)}{\LE_n(P,\Lambda)}\leq \frac{1}{2}\sum_{i=1}^n \lambda_i(P,\Lambda).
 \end{equation}
 The inequality is best possible, e.g., for $C_n$ and $\dual{C_n}$,
 and maybe regarded as a~discrete counterpart to~\eqref{eq:survol} for $o$-symmetric lattice polytopes. For
 generalizations to not necessarily symmetric polytopes or lattice polytopes
 having their centroid at the origin we refer to~\cite{HenkHenzeHernandezCifre2016}. Here we want to
 point out one nice feature of the above inequality for symmetric
 polytopes; together with Minkowski's upper bound it gives the best
 possible inequality
\begin{equation*}
        \LE_{n-1}(P,\Lambda) \leq \sum_{j=1}^n \prod_{i\ne j}
   \frac{2}{\lambda_i(P,\Lambda)}.
 \end{equation*}
 Hence, in view of Minkowski's upper bound on the volume it is
 tempting to conjecture that $\LE_i(P,\Lambda)$ is bounded by the
 $i$-th elementary symmetric function of the successive minima. This,
 however, fails already for $i=n-2$ as shown in~\cite[Proposition
 1.1]{BeyHenkHenzeEtAl2011}, but, on the positive side it is true for
 special lattice polytopes, as parallelepipeds and symmetric
 lattice-face polytopes (see~\cite{BeyHenkHenzeEtAl2011}).

The problem of counting lattice points inside a~convex body may
  also be considered from the more general point of view of covering
 lattice points by a~minimum number of $k$-dimensional affine
 subspaces. For those types of covering problems we refer to~\cite{BalkoCibulkaValtr2019, BaranyHarcosPachEtAl2001,
   BezdekLitvak2009, BrassKnauer2003, FukshanskyHsu2023}
 and here, as an appetizer we only state the following result by Balko et
 al.~\cite[Theorem 2.5]{BalkoCibulkaValtr2019}: Let $K\in\cKn$
 containing $n$ linearly independent lattice points of $\Lambda$ and
 let $1\leq k\leq n-1$. Then up to constants depending on $k$ and $n$
 the lattice points of $K\cap\Lambda$ can be covered by
\begin{equation*}
 \left(\lambda_{k+1}(K,\Lambda)\cdots\lambda_n(K,\Lambda)\right)^{-1}
 \end{equation*}
$k$-dimensional affine subspaces and the bound is optimal -- up to constants depending on $k$ and $n$.

\section{An application: Bombieri-Vaaler extension of Siegel's lemma}\label{sec:bombierivaalersiegel}

Let $A=(a_{ij})\in \Z^{m\times n}$, $m<n$, be an integral matrix of rank $m$.
Consider the system of homogeneous linear equations
\begin{equation}
A{\boldsymbol x}={\boldsymbol 0}\,.
\label{sl_s}
\end{equation}
Since $m<n$, the system~\eqref{sl_s} has a~non--trivial solution in integers. If the entries of $A$
are relatively small integers, then it is reasonable to expect that there will be a~solution in relatively small integers.
This principle was applied by Thue in~\cite{Thue1909} to a~problem from Diophantine approximations.
Siegel~\cite[Bd. I, p. 213, Hilfssatz]{Siegel1929}
was the first to state this idea formally.

Let us denote by $\|A\|_\infty$ the maximum absolute value of an entry of $A$, that is $\|A\|_\infty=\max_{ij} |a_{ij}|$.
Following Siegel's work, one can obtain the following result (included with proof in Schmidt~\cite{Schmidt1991}).
\begin{theorem}[Siegel's Lemma]
The system~\eqref{sl_s} has a solution ${\boldsymbol x}\in \Z^n$ with
\begin{equation}
0<\|{\boldsymbol x}\|_\infty< 1 + (n\|A\|_\infty)^{m/(n-m)}\,.
\label{sl_sl_f}
\end{equation}
\label{sl}
\end{theorem}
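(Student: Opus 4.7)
The plan is to prove Siegel's lemma via Dirichlet's box (pigeonhole) principle, which is Siegel's original approach. The idea is to consider a large cubical set of candidate non-negative integer vectors, observe that multiplication by $A$ compresses this set into a strictly smaller image, and conclude that two inputs must collide; their difference is then a non-trivial integer solution meeting the size requirement.

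Set $N := n\,\|A\|_\infty$ and $B := \lfloor N^{m/(n-m)} \rfloor$, which will serve as the intended coordinate-wise bound. Consider the set
\[
S := \{\vx \in \Z^n : 0 \le x_j \le B \text{ for } 1 \le j \le n\},
\]
of cardinality $(B+1)^n$. For each row index $i$ and each $\vx \in S$, the entry $(A\vx)_i = \sum_j a_{ij} x_j$ is an integer with $|(A\vx)_i| \le n B \|A\|_\infty = NB$, so it takes at most $NB+1$ distinct values. Consequently, $A(S)$ has cardinality at most $(NB+1)^m$.

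For the pigeonhole step, observe that $\|A\|_\infty \ge 1$ since $A$ has rank $m \ge 1$, hence $N \ge 1$ and $NB + 1 \le N(B+1)$. The choice of $B$ gives $B+1 > N^{m/(n-m)}$, which rearranges to $(B+1)^{n-m} > N^m$ and therefore
\[
(B+1)^n > N^m (B+1)^m \ge (NB+1)^m.
\]
By the pigeonhole principle there exist distinct $\vx_1, \vx_2 \in S$ with $A\vx_1 = A\vx_2$; then $\vx := \vx_1 - \vx_2 \in \Z^n \setminus \{\vnull\}$ satisfies $A\vx = \vnull$ and $\|\vx\|_\infty \le B \le N^{m/(n-m)} < 1 + (n\|A\|_\infty)^{m/(n-m)}$, yielding~\eqref{sl_sl_f}.

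The only real obstacle is the bookkeeping in the counting estimate: one must overcount the image set $A(S)$ in a crude enough way that the bound factors nicely, yet cleanly enough that the resulting exponent comes out as $m/(n-m)$. The ``$+1$'' slack in~\eqref{sl_sl_f} is precisely what absorbs the rounding error from taking the integer floor in the definition of $B$, and the hypothesis $\|A\|_\infty \ge 1$ is what allows the harmless replacement of $NB+1$ by $N(B+1)$ in the key inequality.
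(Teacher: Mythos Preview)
Your argument is the classical Thue--Siegel pigeonhole proof and is essentially correct, but one step is under-justified. From $|(A\vx)_i|\le NB$ alone you only get that $(A\vx)_i$ lies among the $2NB+1$ integers in $[-NB,NB]$, not $NB+1$. The sharper count you state does hold, but it needs the extra observation that since every $x_j\ge 0$, the value $(A\vx)_i$ actually ranges over an interval of length at most $\sum_j |a_{ij}|\,B\le NB$ (its maximum is $B\sum_{a_{ij}>0}a_{ij}$ and its minimum is $B\sum_{a_{ij}<0}a_{ij}$), hence at most $NB+1$ integer values. With this one-line fix the remainder of your counting goes through verbatim.

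As for comparison with the paper: the paper does not give its own proof of this statement. It records Siegel's lemma with a pointer to Schmidt's lecture notes for the argument, and then proves instead the stronger Bombieri--Vaaler version (Theorem~\ref{BV_theo_sm}) by combining Minkowski's theorem on successive minima with Vaaler's lower bound on the volume of a central cube section. So your elementary pigeonhole route is precisely the approach alluded to (and the one Siegel used); the paper's own contribution in this section is the geometry-of-numbers path to the sharper product bound, not an alternative proof of the classical lemma.
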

Notably, the exponent $m/(n-m)$ on the right hand side of (\ref{sl_sl_f}) is optimal.
Siegel's lemma type results have been motivated by their numerous applications in number theory
(see e.g.~\cite{BombieriVaaler1983,Schmidt1991,Fukshansky2006a}). In more recent years, new applications have been developed, in particular in mathematical optimization~\cite{AlievDeLoeraEisenbrandEtAl2018, AlievDeLoeraOertelEtAl2017}.

To establish a~link between Siegel's lemma and successive minima, we follow the work of Bombieri and Vaaler~\cite{BombieriVaaler1983}. They have proved, by using geometry of numbers, the following advanced version of Siegel's lemma.
\begin{theorem}\label{BV_theo}
The system~\eqref{sl_s} has $n-m$
linearly independent solutions
${\boldsymbol x}_1, \ldots, {\boldsymbol x}_{n-m}$ in $\Z^n$, with
\begin{equation*}
\prod_{i=1}^{n-m}\|{\boldsymbol x}_i\|_{\infty}
\le \frac{\sqrt{\det(AA^{T})}}{\gcd(A)},
\end{equation*}
where $\gcd(A)$ is the greatest common
divisor of all $m\times m$ subdeterminants of $A$.
\end{theorem}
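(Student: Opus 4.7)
The plan is to apply Minkowski's theorem on successive minima (Theorem~\ref{thm:secondminimum}, in the form~\eqref{eq:secondminimumgenlat}) to the lattice of integer solutions of $A\vx=\vnull$ intersected with the standard cube, and then convert the resulting product bound on the successive minima into the desired bound on $n-m$ linearly independent integer solutions.

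Set $V:=\{\vx\in\R^n : A\vx=\vnull\}$, a linear subspace of $\R^n$ of dimension $n-m$, and let $\Lambda:=V\cap\Z^n$, which is a lattice of rank $n-m$ lying in $V$. The first preparatory step is the identity
\begin{equation*}
   \det(\Lambda)=\frac{\sqrt{\det(AA^{\trans})}}{\gcd(A)}.
\end{equation*}
This is a standard consequence of the Smith normal form $A=UDV$ with $U\in\GL_m(\Z)$, $V\in\GL_n(\Z)$, and diagonal elementary divisors $d_1,\dots,d_m$: the kernel lattice $\Lambda$ is $\Z$-generated by the last $n-m$ columns of $V^{-1}$, its $(n-m)$-dimensional volume can be computed from the Gram determinant, while $\gcd(A)=d_1\cdots d_m$ by the theory of determinantal ideals and $\det(AA^\trans)$ is given via the Cauchy--Binet formula.

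Next, consider the centrally symmetric convex body $K:=C_n\cap V=[-1,1]^n\cap V$, viewed as a full-dimensional body in $V$. Minkowski's theorem on successive minima, applied in the ambient space $V$ to the body $K$ and the full-rank lattice $\Lambda\subset V$, yields
\begin{equation*}
   \lambda_1(K,\Lambda)\,\lambda_2(K,\Lambda)\cdots\lambda_{n-m}(K,\Lambda)\,\vol_{n-m}(K)\leq 2^{n-m}\det(\Lambda).
\end{equation*}
To convert this into the theorem one needs a lower bound on $\vol_{n-m}(C_n\cap V)$. The key ingredient here is Vaaler's cube-slicing inequality: for every linear subspace $V$ of dimension $d$ in $\R^n$ one has $\vol_d(C_n\cap V)\geq 2^d$. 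Inserting this into the Minkowski bound together with the formula for $\det(\Lambda)$ gives
\begin{equation*}
   \lambda_1(K,\Lambda)\cdots\lambda_{n-m}(K,\Lambda)\leq \frac{\sqrt{\det(AA^{\trans})}}{\gcd(A)}.
\end{equation*}

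Finally, by the very definition of the successive minima one can choose linearly independent vectors $\vx_1,\dots,\vx_{n-m}\in\Lambda$ with $\vx_i\in\lambda_i(K,\Lambda)\,K$; since $\vx\in\lambda K$ is equivalent to $\|\vx\|_\infty\leq\lambda$, this yields $\|\vx_i\|_\infty\leq\lambda_i(K,\Lambda)$ and hence $\prod_{i=1}^{n-m}\|\vx_i\|_\infty\leq\sqrt{\det(AA^\trans)}/\gcd(A)$, proving the theorem. The conceptually heaviest step is Vaaler's cube-slicing inequality, which supplies the sharp lower bound on the $(n-m)$-dimensional volume of a cube section; the determinant formula and the invocation of Minkowski's bound are comparatively routine.
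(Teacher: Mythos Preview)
Your proof is correct and follows essentially the same route as the paper: define the kernel lattice $\Lambda(A)=\Z^n\cap\ker(A)$ with $\det(\Lambda(A))=\sqrt{\det(AA^\trans)}/\gcd(A)$, apply Minkowski's successive-minima inequality~\eqref{eq:secondminimumgenlat} to the cube section $S(A)=C_n\cap\ker(A)$, and invoke Vaaler's lower bound $\vol_{n-m}(S(A))\ge 2^{n-m}$. The paper presents this as Theorem~\ref{BV_theo_sm} and states that Theorem~\ref{BV_theo} is an immediate corollary; you have simply spelled out a few more details (the Smith-normal-form justification of the determinant formula and the final extraction of the vectors $\vx_i$).
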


Recall that $C_n=[-1,1]^n$ and let $\ker(A)=\{{\boldsymbol x}\in \R^n : A{\boldsymbol x}={\boldsymbol 0}\}$. Consider the section $S(A)=C_n\cap \ker(A)$ of the cube $C_n$ and the lattice $\Lambda(A)=\Z^n \cap \ker(A)$. The lattice $\Lambda(A)$ has determinant $\det(\Lambda(A))=\sqrt{\det(AA^{T})}/\gcd(A)$. The $(n-m)$-dimensional subspace $\ker(A)$ can be considered as a~usual Euclidean $(n-m)$-dimensional space. This immediately extends the definition of successive minima to $o$-symmetric bounded convex sets with nonempty relative interior in $\ker(A)$ and $(n-m)$-dimensional lattices in $\ker(A)$. Theorem~\ref{BV_theo} is an immediate corollary of the following result.
\begin{theorem}\label{BV_theo_sm} Let $A\in \Z^{m\times n}$, $m<n$, be an integral matrix of rank $m$.
Then the inequality
\begin{equation}
\prod_{i=1}^{n-m} \lambda_i(S(A), \Lambda(A)) \le \det(\Lambda(A))
\end{equation}
holds.
\end{theorem}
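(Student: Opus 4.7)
The plan is to reduce the statement to Minkowski's upper bound on successive minima, applied inside the $(n-m)$-dimensional ambient space $\ker(A)$, and then to invoke the classical cube-slicing inequality of Vaaler to control the volume of $S(A)$.

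First I would observe that $S(A)=C_n\cap\ker(A)$ is an $o$-symmetric convex body of full dimension $n-m$ in the Euclidean space $\ker(A)$, and that $\Lambda(A)=\Z^n\cap\ker(A)$ is a lattice of full rank in $\ker(A)$ whose determinant equals $\sqrt{\det(AA^\trans)}/\gcd(A)$ (this is a standard fact about lattices of the form $\Z^n\cap\ker(A)$ and is already used in the paragraph preceding the theorem). Identifying $\ker(A)$ with $\R^{n-m}$ via any linear isometry, the quantities $\lambda_i(S(A),\Lambda(A))$, $\vol_{n-m}(S(A))$ and $\det(\Lambda(A))$ are exactly the Minkowski successive minima, the volume and the determinant of $o$-symmetric data in dimension $n-m$.

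Next, applying the upper bound in Minkowski's theorem on successive minima, in the general-lattice form \eqref{eq:secondminimumgenlat}, to the pair $(S(A),\Lambda(A))$ in dimension $n-m$ yields
\begin{equation*}
\prod_{i=1}^{n-m}\lambda_i(S(A),\Lambda(A))\,\vol_{n-m}(S(A))\;\le\;2^{n-m}\det(\Lambda(A)).
\end{equation*}
Thus the theorem will follow once the volume lower bound
\begin{equation*}
\vol_{n-m}(S(A))\;\ge\;2^{n-m}
\end{equation*}
is established. This is precisely the content of Vaaler's cube-slicing inequality, which asserts that any central section of the cube $[-1,1]^n$ by a $k$-dimensional linear subspace has $k$-dimensional volume at least $2^{k}$; equivalently every central slice of $[-1/2,1/2]^n$ has volume at least $1$.

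Combining these two ingredients gives
\begin{equation*}
\prod_{i=1}^{n-m}\lambda_i(S(A),\Lambda(A))\;\le\;\frac{2^{n-m}\det(\Lambda(A))}{\vol_{n-m}(S(A))}\;\le\;\det(\Lambda(A)),
\end{equation*}
which is the claimed inequality. The main obstacle is entirely bundled into Vaaler's volume bound: Minkowski's theorem is invoked as a black box, and the only nontrivial geometric input is the fact that central cube sections never shrink below the ``diagonal'' value $2^{n-m}$. Since Vaaler's inequality is a well-known result that one may quote, the proof reduces to a one-line combination, and the theorem then delivers Theorem~\ref{BV_theo} by noting that the first $n-m$ successive minima can be realized by $n-m$ linearly independent lattice points $\vx_1,\dots,\vx_{n-m}\in\Lambda(A)$ with $\|\vx_i\|_\infty=\lambda_i(S(A),\Lambda(A))$.
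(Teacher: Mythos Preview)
Your proof is correct and follows essentially the same approach as the paper: apply Minkowski's upper bound \eqref{eq:secondminimumgenlat} to the pair $(S(A),\Lambda(A))$ inside $\ker(A)$, then invoke Vaaler's cube-slicing inequality $\vol_{n-m}(S(A))\ge 2^{n-m}$ to cancel the factor $2^{n-m}$. The paper's proof is the same one-line combination, just with less exposition.
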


\begin{proof}
By a~result of Vaaler~\cite{Vaaler1979}, we have $\vol_{n-m}(S(A))\ge 2^{n-m}$. Hence, Minkowski's theorem on successive minima in the form~\eqref{eq:secondminimumgenlat} gives
\begin{equation*}
\prod_{i=1}^{n-m} \lambda_i(S(A), \Lambda(A)) \le \frac{2^{n-m}\det(\Lambda(A))}{\vol_{n-m}(S(A))}\le \det(\Lambda(A))\,.
\qedhere
\end{equation*}
\end{proof}

In what follows, we will focus on the special case $m=1$, that is when $A$ is just an $n$-dimensional nonzero row vector.
Theorem~\ref{BV_theo} implies that for every nonzero vector
${\boldsymbol a}$ in $\Z^{n}$, $n\ge 2$, there exists a~vector ${\boldsymbol x}$
in $\Z^{n}$, such that
\begin{equation}\label{knapsack_bound}
\ip{\boldsymbol a}{\boldsymbol x}=0\,,\;\;\;
0<\|{\boldsymbol x}\|_{\infty}^{n-1}\le \sqrt{n}\|{\boldsymbol a}\|_\infty\,.
\end{equation}

The exponent $n-1$ in the latter bound is optimal.
Let us define
\begin{equation*}
c(n)=\sup_{{\boldsymbol a}\in\Z^{n}\setminus\{0\}}
\inf_{
\begin{array}
{c}
\scriptstyle{\boldsymbol x}\in\Z^{n}\setminus\{0\}\\
\scriptstyle\ip{\boldsymbol a}{\boldsymbol x}=0
\end{array}
}
\frac{\|{\boldsymbol x}\|_\infty^{n-1}}{\|{\boldsymbol a}\|_\infty}\,.
\end{equation*}
That is $c(n) $ is the optimal constant in the bound~\eqref{knapsack_bound}.

It is easy to see that $c(2)=1$. Further, the equality $c(3)=4/3$
is implicit in~\cite{ChaladusSchinzel1991}. Namely, the inequality
$c(3)\le 4/3$ is contained in~\cite[Lemma 4]{ChaladusSchinzel1991},
while the inequality $c(3)\ge 4/3$ is a~consequence
of~\cite[Lemma 7]{ChaladusSchinzel1991}.
We have also $c(4)=27/19$.
The inequality $c(4)\ge 27/19$ was proved by
Chaladus in~\cite{Chaladus1992} and its counterpart $c(4)\le 27/19$ was obtained in~\cite{Aliev2001} (see also~\cite{Schinzel2002}).
For $n>4$, the exact values of the constants $c(n)$ remain unknown.

In this vein, Schinzel~\cite{Schinzel2002} proved
the following general result that gives a~geometric interpretation for the constant $c(n)$. Given $K\in \cKn$ we denote by $\Delta(K)$ its {critical determinant}, defined as
\begin{equation*}
\Delta(K)=\min\{\det(\Lambda): 2\Lambda \mbox{ is a~packing lattice for } K\}\,.
\end{equation*}
In terms of the density $\delta(K)$ (see~\eqref{def:density}), we have
\begin{equation*}
\Delta(K)=\frac{\vol(K)}{2^n \delta(K)}\,.
\end{equation*}

\begin{theorem}
For $n\ge 3$
\begin{equation*}
c(n)=\sup \Delta(
H_{\alpha_1,\ldots,\alpha_{n-3}}
)^{-1}\,,
\end{equation*}
where
${H}_{\alpha_1,\ldots,\alpha_{n-3}}$
is a~generalised hexagon given by
\begin{equation*}
{H}_{\alpha_1,\ldots,\alpha_{n-3}}=\left\{\vx\in\R^{n-1}: \|{\boldsymbol
    x}\|_\infty\leq 1, \;
\left |\sum_{i=1}^{n-3}\alpha_ix_i+x_{n-2}+x_{n-1}\right |\le 1\right\}
\end{equation*}
and the supremum is taken over all rational numbers $\alpha_1,\ldots,\alpha_{n-3}$ in the
interval $(0,1]$.
\end{theorem}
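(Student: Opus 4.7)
The idea is to convert $c(n)$ into a geometric quantity involving critical determinants via the case $m=1$ of Theorem~\ref{BV_theo_sm}. First, for a primitive $\va\in\Z^n\wonull$, note that $\lambda_1(S(\va),\Lambda(\va))=\inf_{\vx\in\Lambda(\va)\wonull}\|\vx\|_\infty$, so
\[
 c(n)=\sup_{\va}\,\frac{\lambda_1(S(\va),\Lambda(\va))^{n-1}}{\|\va\|_\infty}.
\]
By definition of $\lambda_1$, $\inte(\lambda_1\,S(\va))\cap\Lambda(\va)=\{\vnull\}$, and so the defining property of the critical determinant (applied inside the hyperplane $\ker(\va)$) gives
\[
 \lambda_1(S(\va),\Lambda(\va))^{n-1}\,\Delta(S(\va))\le \det(\Lambda(\va)),
\]
with asymptotic equality whenever $\Lambda(\va)$ approximates a critical lattice of $S(\va)$.

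Next I would realise $S(\va)$ as a generalised hexagon. Reordering coordinates so that $a_n=\|\va\|_\infty>0$ and projecting $\ker(\va)$ onto $\R^{n-1}$ by dropping the $n$-th coordinate, $S(\va)$ is mapped affinely onto
\[
 G_{\alpha_1,\ldots,\alpha_{n-1}}=\bigl\{\vx\in\R^{n-1}:\|\vx\|_\infty\le 1,\;|\alpha_1 x_1+\cdots+\alpha_{n-1}x_{n-1}|\le 1\bigr\}
\]
with $\alpha_i=a_i/a_n\in[-1,1]\cap\Q$, while $\Lambda(\va)$ is mapped onto a sublattice of $\Z^{n-1}$ of index $a_n=\|\va\|_\infty$. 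The common affine-distortion factor cancels in the ratio $\Delta/\det$, so
\[
 \frac{\lambda_1(S(\va),\Lambda(\va))^{n-1}}{\|\va\|_\infty}\;\le\;\frac{1}{\Delta(G_{\alpha_1,\ldots,\alpha_{n-1}})}.
\]
Using signed permutations of coordinates (symmetries of $C_{n-1}$) I would reduce to $\alpha_i\in[0,1]$; the boundary cases $\alpha_i=0$ are sub-extremal because $G_\alpha$ then factors through an interval, which lowers $\Delta^{-1}$.

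The central step is to pass from the $(n-1)$-parameter family of bodies $G_{\alpha_1,\ldots,\alpha_{n-1}}$ to the $(n-3)$-parameter subfamily $H_{\alpha_1,\ldots,\alpha_{n-3}}=G_{\alpha_1,\ldots,\alpha_{n-3},1,1}$; equivalently, one must show that the supremum defining $c(n)$ is attained, asymptotically, by integer vectors $\va$ having two coordinates (in addition to $a_n$) equal in absolute value to $\|\va\|_\infty$, so that after projection two of the $\alpha_i$ become $1$ and only $n-3$ rational ratios remain as genuine parameters. For the reverse inequality, given any rational $\alpha\in(0,1]^{n-3}$ the plan is to construct $\va=(b_1,\ldots,b_{n-3},k,k,k)\in\Z^n$ with $b_i/k=\alpha_i$ (clearing denominators), chosen so that the projected lattice approximates a critical lattice of $H_\alpha$, thereby forcing $\lambda_1^{n-1}/\|\va\|_\infty\to\Delta(H_\alpha)^{-1}$.

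The hard part will be precisely this reduction to the $(n-3)$-parameter family with two coefficients forced to $1$: as $\alpha$ and $\beta$ vary, the bodies $G_{\alpha_1,\ldots,\alpha_{n-1}}$ and $H_{\beta_1,\ldots,\beta_{n-3}}$ are not nested, so no direct monotonicity of $\Delta^{-1}$ supplies the reduction (indeed, shrinking one $\alpha_i$ enlarges the body in some directions but shrinks it in others). A finer extremal argument---locating, for each admissible $\alpha$, an $H_\beta$ whose critical lattice bounds the corresponding ratio---is the technical core of Schinzel's theorem.
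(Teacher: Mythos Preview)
The paper does not give its own proof of Schinzel's theorem; it is quoted as a result from~\cite{Schinzel2002}. What the paper \emph{does} contain, inside the proof of Theorem~\ref{small_s}, is precisely the ingredient you flag as the ``technical core'': Lemma~\ref{smaller_section}. So the relevant comparison is between your description of the hard step and that lemma.

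Your outline up to the reduction $c(n)\le \sup_\alpha \Delta(G_{\alpha_1,\dots,\alpha_{n-1}})^{-1}$ is correct and matches the standard route (it is essentially the computation the paper carries out in~\eqref{prod_2} and the lines following it). Where your assessment goes astray is the claim that ``no direct monotonicity of $\Delta^{-1}$ supplies the reduction'' from $G_\alpha$ to $H_\beta$. It is true that moving a single coordinate $\alpha_i$ towards $1$ does not nest the bodies. But Lemma~\ref{smaller_section} shows that the \emph{simultaneous} rescaling $\beta_i=\alpha_i/\alpha_{n-2}$ (for $i\le n-3$) gives the genuine inclusion
\[
H_{\beta_1,\dots,\beta_{n-3}}=K_{\beta_1,\dots,\beta_{n-3},1,1}\subset K_{\alpha_1,\dots,\alpha_{n-1}}=G_{\alpha_1,\dots,\alpha_{n-1}},
\]
proved by two lines of triangle-inequality manipulation. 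Monotonicity of the critical determinant under inclusion then yields $\Delta(G_\alpha)^{-1}\le \Delta(H_\beta)^{-1}$ directly. So the step you anticipate as requiring a ``finer extremal argument'' is in fact an elementary inclusion with a clever (but explicit) choice of $\beta$; you have overestimated its difficulty.

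For the reverse inequality your plan is too loose. For a fixed rational tuple $\alpha\in(0,1]^{n-3}$ there is essentially one primitive $\va=(b_1,\dots,b_{n-3},k,k,k)$ with $b_i/k=\alpha_i$, and the projected lattice $\pi(\Lambda(\va))$ is then \emph{determined}; you cannot ``choose'' it to approximate a critical lattice of $H_\alpha$. The actual argument (in Schinzel's paper) lets the rational parameters vary so that the associated lattices sweep out enough sublattices of $\Z^{n-1}$ to approach the critical value; continuity of $\Delta$ is also used. This direction needs more than you have indicated.
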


Based on the values of $c(n)$ for $n\le 4$, the following conjecture was proposed in~\cite{Aliev2008}.
\begin{conjecture}
The equality
\begin{equation*}
c(n)=\Delta(H_{1,\ldots,1})^{-1}
\end{equation*}
holds. Here $H_{1,\ldots,1}$ is a~generalised hexagon in $\R^{n-1}$.
\end{conjecture}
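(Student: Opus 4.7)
The plan is to establish that the supremum in Schinzel's theorem, taken over rational $(\alpha_1,\ldots,\alpha_{n-3})\in(0,1]^{n-3}$, is attained at the single point $\alpha=(1,\ldots,1)$. Equivalently, one must prove $\Delta(H_{1,\ldots,1})\leq \Delta(H_{\alpha_1,\ldots,\alpha_{n-3}})$ for every admissible parameter vector, so that the maximally symmetric member of the family has the smallest critical determinant.

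First I would extend the supremum from rational to real parameters: the assignment $\alpha\mapsto H_\alpha$ is continuous in the Hausdorff metric, and Mahler's selection theorem ensures that $\Delta$ varies continuously within a fixed bounded region, so density of $\Q$ in $\R$ gives $c(n)=\sup_{\alpha\in(0,1]^{n-3}}\Delta(H_\alpha)^{-1}$. Passing to the closure $[0,1]^{n-3}$ and observing that any boundary value $\alpha_i=0$ merely drops the $i$-th coordinate from the linear functional (reducing $H_\alpha$ to a lower-dimensional hexagon whose contribution is controlled inductively via $c(n')$ for $n'<n$), compactness produces a maximizer $\alpha^*\in[0,1]^{n-3}$. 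Next I would exploit the symmetries of $H_{1,\ldots,1}$: with every $\alpha_i=1$ the linear constraint reads $|\sum_{i=1}^{n-1}x_i|\leq 1$, so $H_{1,\ldots,1}$ is invariant under the full coordinate-permutation group $S_{n-1}$, whereas a generic $H_\alpha$ has a strictly smaller stabilizer in $\GL_{n-1}(\Z)$. The goal would then be a symmetrization procedure -- moving $\alpha^*$ toward $(1,\ldots,1)$ by lattice-preserving rearrangements that weakly decrease $\Delta$ -- combined with the verified base cases $c(3)=4/3$ and $c(4)=27/19$ to drive an inductive argument on $n$.

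The main obstacle will be exactly this symmetrization/monotonicity step. Critical determinants are notoriously rigid: $\alpha\mapsto\Delta(H_\alpha)$ is not known to be concave, quasi-concave, or even smooth on the interior of the parameter cube, and $\Delta(H_{1,\ldots,1})$ itself has no closed form for $n-1\geq 4$. A realistic attack might combine Minkowski's reduction theory -- to enumerate the finitely many combinatorial types of densest packing lattices that can realize $\Delta(H_\alpha)$ -- with a dual reformulation via the polar body and the flatness machinery discussed earlier in the survey, together with an extremality analysis of the contact pattern between a putative optimal lattice and the boundary of $H_\alpha$ as $\alpha$ varies. Absent a new structural insight into how these contact patterns evolve with $\alpha$, the conjecture seems to resist a direct approach for $n\geq 5$, consistent with its current open status.
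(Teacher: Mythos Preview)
The statement you were given is a \emph{conjecture}, not a theorem: the paper explicitly labels it as such, attributes it to \cite{Aliev2008}, and provides no proof. There is therefore nothing in the paper to compare your proposal against. What the paper \emph{does} prove in the surrounding section is the related but distinct Theorem~\ref{small_s}, namely $s(n)=c(n)$ for $n\in\{3,4\}$; the conjecture about $c(n)=\Delta(H_{1,\ldots,1})^{-1}$ for general $n$ remains open.

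Your write-up is better described as a research outline than a proof, and you yourself acknowledge this in the final paragraph. The continuity/compactness reduction and the symmetry observation are reasonable first moves, but the core step---showing that $\Delta(H_\alpha)$ is minimized at $\alpha=(1,\ldots,1)$---is exactly the content of the conjecture, and your proposal offers no mechanism to establish it. The ``symmetrization procedure \dots\ that weakly decreases $\Delta$'' is asserted rather than constructed, and you correctly identify that no monotonicity or concavity of $\alpha\mapsto\Delta(H_\alpha)$ is known. So the proposal does not constitute a proof, nor does it claim to; it is an honest assessment of why the problem is hard, which matches the paper's own stance in leaving it open.
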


From the perspective of Theorems~\ref{BV_theo}--\ref{BV_theo_sm} and the successive minima, it is natural to consider for $n\ge 2$ the constant
\begin{equation}\label{constant_s}
s(n)=\sup_{{\boldsymbol a}\in\Z^{n}\setminus\{0\}}
\inf_{{\boldsymbol x}_1, \ldots, {\boldsymbol x}_{n-1}}\frac{\prod_{i=1}^{n-1}\|{\boldsymbol x}_i\|_\infty}{\|{\boldsymbol a}\|_\infty}\,,
\end{equation}
where the infimum is taken over all linearly independent integer vectors $ {\boldsymbol x}_1, \ldots, {\boldsymbol x}_{n-1}\in\Z^n$ such that
$\ip{\boldsymbol a}{{\boldsymbol x}_1}=\cdots=\ip{\boldsymbol a}{{\boldsymbol x}_{n-1}}=0$.

The bound in Theorem~\eqref{BV_theo} immediately implies
\begin{equation}\label{old_bound_1}
s(n)\le\sqrt{n}\,.
\end{equation}

In~\cite{Aliev2008}, the constant $s(n)$ was estimated as
\begin{equation}
s(n)\le\sigma_{n}^{-1}\,,\label{tozhe_nasha}
\end{equation}
where $\sigma_{n}$ is the {sinc} integral
\begin{equation*}
\sigma_{n}=\frac{2}{\pi}\int_0^\infty\left(\frac{\sin
t}{t}\right)^{n} dt\,.
\end{equation*}
The bound~\eqref{tozhe_nasha}
asymptotically
improves on~\eqref{old_bound_1} with factor $\sqrt{\pi/6}$.
The numbers $\sigma_{n}$ are rational, the sequences of numerators and denominators of $\sigma_{n}/2$ can be
found in~\cite{Sloane} (sequences A049330 and A049331).

Clearly, $s(2)=c(2)=1$. In this section we prove the following result, mentioned without proof in~\cite[Remark 1 (ii)]{Aliev2008}.
\begin{theorem}\label{small_s}
For $n\in\{3,4\}$ we have $s(n)=c(n)$. That is
\begin{equation*}
s(3)=\frac{4}{3} \mbox{ and } s(4)=\frac{27}{19}\,.
\end{equation*}
\end{theorem}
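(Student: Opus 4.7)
The plan is to establish both inequalities $s(n)\ge c(n)$ and $s(n)\le c(n)$ for $n\in\{3,4\}$.

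The direction $s(n)\ge c(n)$ holds for every $n\ge 2$: for any $\va\in\Z^n\setminus\{\vnull\}$ and any linearly independent $\vx_1,\ldots,\vx_{n-1}\in\Z^n$ orthogonal to $\va$, the $\vx_i$ lie in the lattice $\Lambda(\va)\subset\ker(\va^\trans)$, so each $\|\vx_i\|_\infty \ge \lambda_1(S(\va),\Lambda(\va))$ and hence $\prod_i\|\vx_i\|_\infty \ge \lambda_1^{\,n-1}$. In fact the infimum in~\eqref{constant_s} equals $\prod_{i=1}^{n-1}\lambda_i(S(\va),\Lambda(\va))$, attained by vectors realising the successive minima. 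Taking sup over $\va$ gives $s(n)\ge c(n)$.

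For the converse with $n=3$, I would apply Minkowski's theorem on successive minima~\eqref{eq:secondminimumgenlat} in dimension two to $(S(\va),\Lambda(\va))$: $\lambda_1\lambda_2\le 4\det(\Lambda(\va))/\vol_2(S(\va))$. After reducing to $\gcd(\va)=1$ so that $\det(\Lambda(\va))=\|\va\|_2$, and coordinates with $a_1\ge a_2\ge a_3\ge 0$ and $a_1=\|\va\|_\infty$, parameterise $S(\va)$ by $(x_2,x_3)\in[-1,1]^2$ subject to $|a_2 x_2+a_3 x_3|\le a_1$; the Jacobian of the linear map into $\ker(\va^\trans)$ is $\|\va\|_2/a_1$, so $\vol_2(S(\va))=A(\va)\cdot\|\va\|_2/a_1$, where $A(\va)$ is the area of that planar slab. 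A direct case analysis yields $A(\va)\ge 3$: if $a_2+a_3\le a_1$ the slab fills $[-1,1]^2$ so $A=4$; otherwise the two congruent corner triangles excised have total area $(a_2+a_3-a_1)^2/(a_2 a_3)\le 1$, since $a_2+a_3-a_1\le\min(a_2,a_3)$. Substituting gives $\lambda_1\lambda_2\le (4/3)\|\va\|_\infty$, so $s(3)\le 4/3=c(3)$.

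For $n=4$ the analogous Minkowski bound is not sharp enough (already at $\va=(1,1,1,1)$ one computes $\vol_3(S(\va))=32/3$ and gets only $\lambda_1\lambda_2\lambda_3\le 3/2>27/19$). Instead I would invoke the three-dimensional case of Davenport's Problem~\ref{conj:davenport}, proved by Woods~\cite{Woods1956}, applied to the centrally symmetric body $S(\va)$ and lattice $\Lambda(\va)$ in dimension $3$: $\lambda_1\lambda_2\lambda_3\le \det(\Lambda(\va))/\Delta(S(\va))$. With $\gcd(\va)=1$ the task reduces to showing $\Delta(S(\va))\ge (19/27)\|\va\|_2/\|\va\|_\infty$. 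Parameterising $S(\va)$ and rescaling by $\beta_i:=|a_{i+1}/a_1|\in(0,1]\cap\Q$ identifies $\Delta(S(\va))=(\|\va\|_2/\|\va\|_\infty)\cdot\Delta(Q_{\beta_1,\beta_2,\beta_3})$, where $Q_{\beta_1,\beta_2,\beta_3}=\{\vz\in[-1,1]^3:|\beta_1 z_1+\beta_2 z_2+\beta_3 z_3|\le 1\}$, so the required bound becomes $\Delta(Q_{\beta_1,\beta_2,\beta_3})\ge 19/27$. The one-parameter specialisation $\beta_2=\beta_3=1$ is exactly Schinzel's hexagon $H_{\beta_1}$, and the corresponding bound $\Delta(H_\beta)\ge 19/27$ is the content of the inequality $c(4)\le 27/19$ established in~\cite{Aliev2001,Schinzel2002}; the multi-parameter version is obtained by adapting those arguments.

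The principal obstacle is precisely this last extension: promoting $\Delta(H_\beta)\ge 19/27$ to $\Delta(Q_{\beta_1,\beta_2,\beta_3})\ge 19/27$ uniformly over the three-parameter family of rational $\beta_i\in(0,1]$, which is where the deepest part of the $n=4$ argument sits. Modulo this reduction, Woods' theorem closes the gap and yields $s(4)\le c(4)=27/19$.
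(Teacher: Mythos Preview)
Your overall architecture matches the paper's proof: the easy inequality $c(n)\le s(n)$, the rewriting of $s(n)$ via successive minima, the use of Woods' theorem (anomaly $=1$ in dimension $\le 3$) to pass from $\prod\lambda_i$ to $\det(\Lambda(\va))/\Delta(S(\va))$, and the projection to $K_{\alpha_1,\ldots,\alpha_{n-1}}\subset\R^{n-1}$ are exactly what the paper does. Your treatment of $n=3$ is a mild variation: instead of invoking the critical determinant and the containment $K_{1,1}\subset K_{\alpha_1,\alpha_2}$, you apply Minkowski's second theorem directly and bound the area by $3$. That is correct and arguably more elementary; the two routes coincide because the extremal hexagon $K_{1,1}$ tiles the plane, so $\delta(K_{1,1})=1$ and Minkowski's bound equals the Davenport bound there.

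The gap is in the step you yourself flag as ``the principal obstacle'' for $n=4$: promoting $\Delta(H_\beta)\ge 19/27$ to $\Delta(Q_{\beta_1,\beta_2,\beta_3})\ge 19/27$. This is \emph{not} the deep part of the argument; it is a two-line containment lemma. After ordering $0<\alpha_1\le\alpha_2\le\alpha_3\le 1$, set $\beta:=\alpha_1/\alpha_2\in(0,1]$. Then $K_{\beta,1,1}\subset K_{\alpha_1,\alpha_2,\alpha_3}$: for $\vx\in K_{\beta,1,1}$ one has $|\beta x_1+x_2+x_3|\le 1$ and $|x_3|\le 1$, so multiplying the second by $\alpha_3/\alpha_2-1\ge 0$, adding, and scaling by $\alpha_2$ gives
\[
|\alpha_1 x_1+\alpha_2 x_2+\alpha_3 x_3|\le |\alpha_1 x_1+\alpha_2 x_2+\alpha_2 x_3|+(\alpha_3-\alpha_2)|x_3|\le \alpha_2+(\alpha_3-\alpha_2)=\alpha_3\le 1.
\]
Monotonicity of the critical determinant under inclusion then yields $\Delta(K_{\alpha_1,\alpha_2,\alpha_3})\ge\Delta(K_{\beta,1,1})$, reducing the three-parameter family to the one-parameter family $H_\beta=K_{\beta,1,1}$. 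At that point the paper invokes Whitworth's explicit formula for $\Delta(K_{\beta,1,1})$, which is minimised at $\beta=1$ with value $19/27$. So the ``multi-parameter extension'' you left open is not obtained by adapting a hard argument but by this elementary inclusion; once you insert it, your proof is complete and coincides with the paper's.
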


\begin{proof}
Observe that for any nonzero ${\boldsymbol a}\in \Z^{n}$
\begin{equation*}
\inf_{
\begin{array}
{c}
\scriptstyle{\boldsymbol x}\in\Z^{n}\setminus\{0\}\\
\scriptstyle\ip{\boldsymbol a}{\boldsymbol x}=0
\end{array}
}\frac{\|{\boldsymbol x}\|_\infty^{n-1}}{\|{\boldsymbol a}\|_\infty}
\le \inf_{{\boldsymbol x}_1, \ldots, {\boldsymbol x}_{n-1}}\frac{\prod_{i=1}^{n-1}\|{\boldsymbol x}_i\|_\infty}{\|{\boldsymbol a}\|_\infty}\,,
\end{equation*}
where the latter infimum is taken over the same set as in~\eqref{constant_s}. Hence for all $n\ge 2$
we have
\begin{equation}\label{c_less_s}
c(n)\le s(n)\,.
\end{equation}

In view of~\eqref{c_less_s}, it is sufficient to show that $s(3)\le 4/3$ and $s(4)\le 27/19$. To achieve this goal, we will first express $s(n)$ in terms of successive minima.
Given a~nonzero ${\boldsymbol a}\in \Z^{n}$, let $\ker({\boldsymbol a})=\{{\boldsymbol x}\in \R^{n}: \ip{\boldsymbol a}{\boldsymbol x}=0\}$, and consider the lattice
\begin{equation*}
\Lambda({\boldsymbol a})= \Z^{n}\cap \ker({\boldsymbol a})\,,
\end{equation*}
and the $(n-1)$-dimensional section
\begin{equation*}
S({\boldsymbol a})=C_{n}\cap\ker({\boldsymbol a})
\end{equation*}
of the cube $C_{n}$.
Then, by the definition of successive minima, we have
\begin{equation}\label{s_via_sm}
s(n)=\sup_{{\boldsymbol a}\in\Z^{n}\setminus\{0\}} \frac{\prod_{i=1}^{n-1} \lambda_i(S({\boldsymbol a}), \Lambda({\boldsymbol a}))}{\|{\boldsymbol a}\|_\infty}\,.
\end{equation}

Given $K\in {\mathcal K}_{os}^n$, its {anomaly} $a(K)$ is defined as
\begin{equation*}
a(K)=\sup_{\Lambda\in \Lat^n}\frac{\Delta(K) \prod_{i=1}^n \lambda_i(K, \Lambda)}{\det(\Lambda)}\,.
\end{equation*}
The Problem~\ref{conj:davenport} of Davenport in terms of the anomaly is asking whether $a(K)=1$.
Woods~\cite{Woods1956} proved that $a(K)=1$ holds in dimension up to three. As above, notice that the hyperplane $\ker(\va)$ can be considered as a~usual Euclidean $(n-1)$-dimensional space. This immediately extends the definition of the critical determinant to $o$-symmetric convex sets with nonempty relative interior in $\ker({\boldsymbol a})$.
Hence, for $n\le 4$ we have
\begin{equation}\label{prod_2}
\prod_{i=1}^{n-1} \lambda_i(S({\boldsymbol a}), \Lambda({\boldsymbol a})) \le \frac{\det(\Lambda(\va))}{\Delta(S({\boldsymbol a}))}=\frac{\|{\boldsymbol a}\|_2}{\gcd({\boldsymbol a})\Delta(S({\boldsymbol a}))}\,.
\end{equation}

We may assume without loss of generality that ${\boldsymbol a}$ does not have zero entries. Otherwise, we can replace $n$ with $n-1$. Hence, without loss of generality, we may assume that $0<a_1\le\cdots\le a_{n}$. Consider the projection $\pi: \R^{n}\rightarrow \R^{n-1}$ that forgets
the last coordinate. Since all entries of ${\boldsymbol a}$ are positive, the mapping $\pi$ restricted to $\ker({\boldsymbol a})$ is bijective. Let $K({\boldsymbol a})=\pi(S({\boldsymbol
  a}))$. One can write $K({\boldsymbol a})$ as follows. Given a sequence of rational numbers $0<\alpha_1\le \cdots\le \alpha_{n-1}\le 1$, let
\begin{equation*}
K_{\alpha_1, \ldots, \alpha_{n-1}}=\{{\boldsymbol x}\in\R^{n-1}: \|{\boldsymbol x}\|_\infty\le 1, |\alpha_1x_1+\cdots+\alpha_{n-1} x_{n-1}|\le 1\}\,.
\end{equation*}
Then $K({\boldsymbol a})=K_{\alpha_1, \ldots, \alpha_{n-1}}$ with
\begin{equation}
\alpha_1=\frac{a_1}{a_{n}}, \ldots, \alpha_{n-1}=\frac{a_{n-1}}{a_{n}}\,.
\label{set_alpha_i}
\end{equation}

For any $(n-1)$-dimensional lattice $\Lambda\subset \ker({\boldsymbol a})$, the lattice $2\Lambda$ is a~packing lattice for $S({\boldsymbol a})$ if and only if $2\pi(\Lambda)$ is a~packing lattice for $K({\boldsymbol a})$. Hence
\begin{equation*}
\Delta(S({\boldsymbol a}))=\Delta(K({\boldsymbol a})) \frac{\|{\boldsymbol a}\|_2}{\|{\boldsymbol a}\|_{\infty}}
\end{equation*}
and, by~\eqref{prod_2},
\begin{equation*}
\prod_{i=1}^{n-1} \lambda_i(S({\boldsymbol a}), \Lambda({\boldsymbol a})) \le \frac{\|{\boldsymbol a}\|_\infty}{\gcd({\boldsymbol a})\Delta(K({\boldsymbol a}))}\,.
\end{equation*}
Consequently, by~\eqref{s_via_sm}, we obtain the inequality
\begin{equation}\label{s_via_Deltas}
s(n)\le \sup_{{\boldsymbol a}\in\Z^{n}\setminus\{0\}}\frac{1}{\gcd(\va)\Delta(K({\boldsymbol a}))}\,.
\end{equation}

The main tool of the proof of Theorem~\ref{small_s} is the following lemma.
\begin{lemma}\label{smaller_section} For any $n\ge 3$ and any rational numbers $0<\alpha_1\le \cdots \le\alpha_{n-1}\le 1$, the following statements hold:
\begin{itemize}
\item[(i)] If $n=3$ then
\begin{equation*}
K_{1,1}\subset K_{\alpha_1, \alpha_2}\,.
\end{equation*}
\item[(ii)] If $n>3$ then there exists rational numbers
$0<\beta_1\le \cdots \le\beta_{n-3}\le 1$ such that
\begin{equation*}
K_{\beta_1, \ldots, \beta_{n-3},1,1}\subset K_{\alpha_1, \ldots, \alpha_{n-1}}\,.
\end{equation*}
\end{itemize}
\end{lemma}

This result was originally proved for $n\le 4$ in~\cite{Aliev2001}, and, subsequently, for all $n$ in~\cite{Schinzel2002}. We include a~proof for completeness.

\begin{proof}
We start with part (i). For ${\boldsymbol x}\in K_{1,1}$, we have
\begin{equation}\label{hexagon2}
\begin{aligned}
|x_1|\le 1, |x_2|\le 1, \left | x_{1} +x_2 \right|\le 1\,.
\end{aligned}
\end{equation}
Multiplying the inequality $|x_2|\le 1$ by $\alpha_2/\alpha_{1}-1$ and adding it to the last inequality in~\eqref{hexagon2} we obtain
the inequality $ \left | \alpha_1 x_{1} + \alpha_2 x_2 \right|\le \alpha_2\le 1$. Hence ${\boldsymbol x}\in K_{\alpha_1, \alpha_2}$.

For part (ii) take
\begin{equation*}
\beta_1=\frac{\alpha_1}{\alpha_{n-2}},\ldots, \beta_{n-3}=\frac{\alpha_{n-3}}{\alpha_{n-2}}\,.
\end{equation*}
Then if ${\boldsymbol x}\in K_{\beta_1, \ldots, \beta_{n-3},1,1}$, we have
\begin{equation}\label{hexagon}
\begin{aligned}
|x_i|\le 1, \; i\in\{1,\ldots, n-1\}\,,\\
\left | \sum_{i=1}^{n-3} \frac{\alpha_i}{\alpha_{n-2}}x_i +x_{n-2} +x_{n-1} \right|\le 1\,.
\end{aligned}
\end{equation}
Multiplying the inequality $|x_{n-1}|\le 1$ by $\alpha_{n-1}/\alpha_{n-2}-1$ and adding it to the last inequality in~\eqref{hexagon}, we obtain the inequality
\begin{equation*}
 \frac{\alpha_{n-1}}{\alpha_{n-2}} \ge \left| \sum_{i=1}^{n-3} \frac{\alpha_i}{\alpha_{n-2}} x_i +x_{n-2} + x_{n-1} \right | + \left(
 \frac{\alpha_{n-1}}{\alpha_{n-2}}-1\right)|x_{n-1}|\,.
\end{equation*}
Multiplying both sides by $\alpha_{n-2}$ we obtain
\begin{equation*}
\begin{aligned}
1\ge \alpha_{n-1} &\ge \left | \sum_{i=1}^{n-3} \alpha_i x_i +\alpha_{n-2} x_{n-2} + \alpha_{n-2} x_{n-1} \right | + \left(
 \alpha_{n-1}-\alpha_{n-2}\right)|x_{n-1}|\\
&\ge
|\alpha_1x_1+\cdots+\alpha_{n-1} x_{n-1}|\,.
\end{aligned}
\end{equation*}
Hence ${\boldsymbol x}\in K_{\alpha_1, \ldots, \alpha_{n-1}}$.
\end{proof}

For the rest of the proof we choose the numbers $\alpha_i$ as in~\eqref{set_alpha_i}.
Suppose first that $n=3$. Since $K_{1,1}\subset K_{\alpha_1,
  \alpha_2}$ we have by Lemma~\ref{smaller_section} part (i)
\begin{equation*}
\Delta(K({\boldsymbol a}))=\Delta(K_{\alpha_1, \alpha_2})\ge \Delta(K_{1, 1})\,.
\end{equation*}
Further, since the hexagon $\Delta(K_{1, 1})$ is a~space filling convex body (we refer the reader to~\cite[Section 20.4]{GruberLekkerkerker1987} for details), we have $\delta(K_{1, 1})=1$ and, consequently,
\begin{equation*}
\Delta(K_{1, 1})=\frac{\vol(K_{1, 1})}{4}=\frac{3}{4}\,.
\end{equation*}
The bound~\eqref{s_via_Deltas} completes the proof in this case.

It remains to consider the case $n=4$. By Lemma~\ref{smaller_section} part (ii) there exists a~rational $0<\beta\le 1$ such that $K_{\beta, 1,1}\subset K_{\alpha_1, \alpha_2,\alpha_3}$. A result of Whitworth~\cite{Whitworth1948} implies that $\Delta(K_{\beta,1,1})$ equals
\begin{equation}
\left \{
\begin{array}
{ll}
3/4\,,& 0\le \beta<1/2\,,\\
-(\beta^2 +3\beta-24+1/\beta)/27\,,& 1/2\le\beta\le 1\,.
\end{array}
\right.
\label{sl_det}
\end{equation}
Hence $\Delta(K_{\beta,1,1})$ takes the minimum in the interval $[0,1]$ at $\beta=1$ and
\begin{equation*}
\Delta(K({\boldsymbol a}))=\Delta(K_{\alpha_1, \alpha_2,\alpha_3})\ge \Delta(K_{1,1,1})=19/27.
\end{equation*}
The bound~\eqref{s_via_Deltas} completes the proof of Theorem~\ref{small_s}.
\end{proof}

\subsection*{Acknowledgements}
The authors thank Matthias Schymura for many valuable comments on an earlier version, and the anonymous referee for detailed and constructive comments.

{\small

}

\EditInfo{April 4, 2023}{April 27, 2023}{Camilla Hollanti and Lenny Fukshansky}


\begin{thebibliography}{00}

\bibitem{Aliev2001}
I.~Aliev.
\newblock {\em On decomposition of integer vectors}.
\newblock PhD thesis, Institute of Mathematics PAN, Warsaw, 2001.

\bibitem{Aliev2008}
I.~Aliev.
\newblock Siegel's lemma and sum-distinct sets.
\newblock {\em Discrete Comput. Geom.}, 39(1-3):59--66, 2008.

\bibitem{AlievDeLoeraEisenbrandEtAl2018}
I.~Aliev, J.~A. De~Loera, F.~Eisenbrand, T.~Oertel, and R.~Weismantel.
\newblock The support of integer optimal solutions.
\newblock {\em SIAM J. Optim.}, 28(3):2152--2157, 2018.

\bibitem{AlievDeLoeraOertelEtAl2017}
I.~Aliev, J.~A. De~Loera, T.~Oertel, and C.~O'Neill.
\newblock Sparse solutions of linear {D}iophantine equations.
\newblock {\em SIAM J. Appl. Algebra Geom.}, 1(1):239--253, 2017.

\bibitem{PaivaBalacheffTzanev2016}
J.~C. \'{A}lvarez Paiva, F.~Balacheff, and K.~Tzanev.
\newblock Isosystolic inequalities for optical hypersurfaces.
\newblock {\em Adv. Math.}, 301:934--972, 2016.

\bibitem{BalkoCibulkaValtr2019}
M.~Balko, J.~Cibulka, and P.~Valtr.
\newblock Covering lattice points by subspaces and counting point-hyperplane
  incidences.
\newblock {\em Discrete Comput. Geom.}, 61(2):325--354, 2019.

\bibitem{Banaszczyk1993a}
W.~Banaszczyk.
\newblock New bounds in some transference theorems in the geometry of numbers.
\newblock {\em Math. Ann.}, 296(4):625--635, 1993.

\bibitem{Banaszczyk1996}
W.~Banaszczyk.
\newblock Inequalities for convex bodies and polar reciprocal lattices in
  {$\mathbf{R}^n$}. {II}. {A}pplication of {$K$}-convexity.
\newblock {\em Discrete Comput. Geom.}, 16(3):305--311, 1996.

\bibitem{BanaszczykLitvakPajorEtAl1999}
W.~Banaszczyk, A.~E. Litvak, A.~Pajor, and S.~J. Szarek.
\newblock The flatness theorem for nonsymmetric convex bodies via the local
  theory of {B}anach spaces.
\newblock {\em Math. Oper. Res.}, 24(3):728--750, 1999.

\bibitem{BaranyHarcosPachEtAl2001}
I.~B\'{a}r\'{a}ny, G.~Harcos, J.~Pach, and G.~Tardos.
\newblock Covering lattice points by subspaces.
\newblock {\em Period. Math. Hungar.}, 43(1-2):93--103, 2001.

\bibitem{BeckRobins2015}
M.~Beck and S.~Robins.
\newblock {\em Computing the continuous discretely}.
\newblock Undergraduate Texts in Mathematics. Springer, New York, second
  edition, 2015.
\newblock Integer-point enumeration in polyhedra, With illustrations by David
  Austin.

\bibitem{BermanBerndtsson2017}
R.~J. Berman and B.~Berndtsson.
\newblock The volume of {K}\"{a}hler-{E}instein {F}ano varieties and convex
  bodies.
\newblock {\em J. Reine Angew. Math.}, 723:127--152, 2017.

\bibitem{BetkeHenkWills1993}
U.~Betke, M.~Henk, and J.~M. Wills.
\newblock Successive-minima-type inequalities.
\newblock {\em Discrete Comput. Geom.}, 9(2):165--175, 1993.

\bibitem{BeyHenkHenzeEtAl2011}
C.~Bey, M.~Henk, M.~Henze, and E.~Linke.
\newblock Notes on lattice points of zonotopes and lattice-face polytopes.
\newblock {\em Discrete Math.}, 311(8-9):634--644, 2011.

\bibitem{BezdekLitvak2009}
K.~Bezdek and A.~E. Litvak.
\newblock Covering convex bodies by cylinders and lattice points by flats.
\newblock {\em J. Geom. Anal.}, 19(2):233--243, 2009.

\bibitem{BombieriGubler2006}
E.~Bombieri and W.~Gubler.
\newblock {\em Heights in {D}iophantine geometry}, volume~4 of {\em New
  Mathematical Monographs}.
\newblock Cambridge University Press, Cambridge, 2006.

\bibitem{BombieriVaaler1983}
E.~Bombieri and J.~Vaaler.
\newblock On {S}iegel's lemma.
\newblock {\em Invent. Math.}, 73(1):11--32, 1983.

\bibitem{BrassKnauer2003}
P.~Brass and C.~Knauer.
\newblock On counting point-hyperplane incidences.
\newblock volume~25, pages 13--20. 2003.
\newblock Special issue on the European Workshop on Computational
  Geometry--CG01 (Berlin).

\bibitem{BrazitikosGiannopoulosValettasEtAl2014}
S.~Brazitikos, A.~Giannopoulos, P.~Valettas, and B.-H. Vritsiou.
\newblock {\em Geometry of isotropic convex bodies}, volume 196 of {\em
  Mathematical Surveys and Monographs}.
\newblock American Mathematical Society, Providence, RI, 2014.

\bibitem{BuragoZalgaller1988}
Y.~D. Burago and V.~A. Zalgaller.
\newblock {\em Geometric inequalities}, volume 285 of {\em Grundlehren der
  mathematischen Wissenschaften [Fundamental Principles of Mathematical
  Sciences]}.
\newblock Springer-Verlag, Berlin, 1988.
\newblock Translated from the Russian by A. B. Sosinski\u{\i}, Springer Series
  in Soviet Mathematics.

\bibitem{CamposHintumMorrisEtAl2022}
M.~Campos, P.~van Hintum, R.~Morris, and M.~Tiba.
\newblock Towards hadwiger's conjecture via bourgain slicing, 2022.
\newblock arXiv:2206.11227.

\bibitem{Cassels1971}
J.~W.~S. Cassels.
\newblock {\em An introduction to the geometry of numbers}.
\newblock Die Grundlehren der mathematischen Wissenschaften, Band 99.
  Springer-Verlag, Berlin-New York, 1971.
\newblock Second printing, corrected.

\bibitem{Chaladus1992}
S.~Cha{\l}adus.
\newblock On the densest lattice packing of centrally symmetric octahedra.
\newblock {\em Math. Comp.}, 58(197):341--345, 1992.

\bibitem{ChaladusSchinzel1991}
S.~Cha{\l}adus and A.~Schinzel.
\newblock A decomposition of integer vectors. {II}.
\newblock {\em Pliska Stud. Math. Bulgar.}, 11:15--23, 1991.

\bibitem{Chang2002}
M.-C. Chang.
\newblock A polynomial bound in {F}reiman's theorem.
\newblock {\em Duke Math. J.}, 113(3):399--419, 2002.

\bibitem{Davenport1946}
H.~Davenport.
\newblock The product of {$n$} homogeneous linear forms.
\newblock {\em Nederl. Akad. Wetensch., Proc.}, 49:822--828 = Indagationes
  Math. 8, 525--531 (1946), 1946.

\bibitem{Eggleston1961}
H.~G. Eggleston.
\newblock Note on a conjecture of {L}. {A}. {S}antal\'{o}.
\newblock {\em Mathematika}, 8:63--65, 1961.

\bibitem{Ehrhart1955a}
E.~Ehrhart.
\newblock Une g\'{e}n\'{e}ralisation du th\'{e}or\`eme de {M}inkowski.
\newblock {\em C. R. Acad. Sci. Paris}, 240:483--485, 1955.

\bibitem{Ehrhart1962}
E.~Ehrhart.
\newblock Sur les poly\`edres rationnels homoth\'{e}tiques \`a {$n$}
  dimensions.
\newblock {\em C. R. Acad. Sci. Paris}, 254:616--618, 1962.

\bibitem{Ehrhart1964}
E.~Ehrhart.
\newblock Une g\'{e}n\'{e}ralisation probable du th\'{e}or\`eme fondamental de
  {M}inkowski.
\newblock {\em C. R. Acad. Sci. Paris}, 258:4885--4887, 1964.

\bibitem{Ehrhart1979a}
E.~Ehrhart.
\newblock Volume r\'{e}ticulaire critique d'un simplexe.
\newblock {\em J. Reine Angew. Math.}, 305:218--220, 1979.

\bibitem{Evertse2019}
J.-H. Evertse.
\newblock Mahler's work on the geometry of numbers.
\newblock {\em Doc. Math.}, Extra Vol.:29--43, 2019.

\bibitem{FejesTothMakai1974}
L.~Fejes~T\'{o}th and E.~Makai, Jr.
\newblock On the thinnest non-separable lattice of convex plates.
\newblock {\em Studia Sci. Math. Hungar.}, 9:191--193 (1975), 1974.

\bibitem{FradeliziMeyerZvavitch2023}
M.~Fradelizi, M.~Meyer, and A.~Zvavitch.
\newblock Volume product, 2023.
\newblock arXiv:2301.06131.

\bibitem{FreyerLucas2022}
A.~Freyer and E.~Lucas.
\newblock Interpolating between volume and lattice point enumerator with
  successive minima.
\newblock {\em Monatsh. Math.}, 198(4):717--740, 2022.

\bibitem{Fukshansky2006a}
L.~Fukshansky.
\newblock Siegel's lemma with additional conditions.
\newblock {\em J. Number Theory}, 120(1):13--25, 2006.

\bibitem{FukshanskyHsu2023}
L.~Fukshansky and A.~Hsu.
\newblock Covering {P}oint-{S}ets with {P}arallel {H}yperplanes and {S}parse
  {S}ignal {R}ecovery.
\newblock {\em Discrete Comput. Geom.}, 69(3):919--930, 2023.

\bibitem{GonzalezMerinoHenze2016}
B.~Gonz\'{a}lez~Merino and M.~Henze.
\newblock A generalization of the discrete version of {M}inkowski's fundamental
  theorem.
\newblock {\em Mathematika}, 62(3):637--652, 2016.

\bibitem{MerinoSchymura2017}
B.~Gonz\'{a}lez~Merino and M.~Schymura.
\newblock On densities of lattice arrangements intersecting every
  {$i$}-dimensional affine subspace.
\newblock {\em Discrete Comput. Geom.}, 58(3):663--685, 2017.

\bibitem{Gruber2007}
P.~M. Gruber.
\newblock {\em Convex and discrete geometry}, volume 336 of {\em Grundlehren
  der mathematischen Wissenschaften [Fundamental Principles of Mathematical
  Sciences]}.
\newblock Springer, Berlin, 2007.

\bibitem{GruberLekkerkerker1987}
P.~M. Gruber and C.~G. Lekkerkerker.
\newblock {\em Geometry of numbers}, volume~37 of {\em North-Holland
  Mathematical Library}.
\newblock North-Holland Publishing Co., Amsterdam, second edition, 1987.

\bibitem{Henk1990}
M.~Henk.
\newblock Inequalities between successive minima and intrinsic volumes of a
  convex body.
\newblock {\em Monatsh. Math.}, 110(3-4):279--282, 1990.

\bibitem{HenkHenzeHernandezCifre2016}
M.~Henk, M.~Henze, and M.~A. Hern\'{a}ndez~Cifre.
\newblock Variations of {M}inkowski's theorem on successive minima.
\newblock {\em Forum Math.}, 28(2):311--325, 2016.

\bibitem{HenkSchuermannWills2005}
M.~Henk, A.~Sch\"{u}rmann, and J.~M. Wills.
\newblock Ehrhart polynomials and successive minima.
\newblock {\em Mathematika}, 52(1-2):1--16 (2006), 2005.

\bibitem{HenkXue2019}
M.~Henk and F.~Xue.
\newblock On successive minima-type inequalities for the polar of a convex
  body.
\newblock {\em Rev. R. Acad. Cienc. Exactas F\'{\i}s. Nat. Ser. A Mat. RACSAM},
  113(3):2601--2616, 2019.

\bibitem{Khinchin1948}
A.~Y. Hin\v{c}in.
\newblock A quantitative formulation of the approximation theory of
  {K}ronecker.
\newblock {\em Izvestiya Akad. Nauk SSSR. Ser. Mat.}, 12:113--122, 1948.

\bibitem{HuangSlomkaTkoczEtAl2022}
H.~Huang, B.~A. Slomka, T.~Tkocz, and B.-H. Vritsiou.
\newblock Improved bounds for {H}adwiger's covering problem via thin-shell
  estimates.
\newblock {\em J. Eur. Math. Soc. (JEMS)}, 24(4):1431--1448, 2022.

\bibitem{IriyehShibata2020}
H.~Iriyeh and M.~Shibata.
\newblock Symmetric {M}ahler's conjecture for the volume product in the
  {$3$}-dimensional case.
\newblock {\em Duke Math. J.}, 169(6):1077--1134, 2020.

\bibitem{KannanLovasz1986}
R.~Kannan and L.~Lov\'{a}sz.
\newblock Covering minima and lattice-point-free convex bodies.
\newblock {\em Ann. of Math. (2)}, 128(3):577--602, 1988.

\bibitem{Klartag2023}
B.~Klartag.
\newblock Logarithmic bounds for isoperimetry and slices of convex sets, 2023.
\newblock arXiv:2303.14938.

\bibitem{Kuperberg2008}
G.~Kuperberg.
\newblock From the {M}ahler conjecture to {G}auss linking integrals.
\newblock {\em Geom. Funct. Anal.}, 18(3):870--892, 2008.

\bibitem{Mahler1938}
K.~Mahler.
\newblock Ein {Minimalproblem} f{\"u}r konvexe {Polygone}.
\newblock {\em Mathematica B, Zutphen}, 7:118--127, 1938.

\bibitem{Mahler1939}
K.~Mahler.
\newblock Ein \"{U}bertragungsprinzip f\"{u}r konvexe {K}\"{o}rper.
\newblock {\em \v{C}asopis P\v{e}st. Mat. Fys.}, 68:93--102, 1939.

\bibitem{Mahler1974}
K.~Mahler.
\newblock Polar analogues of two theorems by {M}inkowski.
\newblock {\em Bull. Austral. Math. Soc.}, 11:121--129, 1974.

\bibitem{Makai1978}
E.~Makai, Jr.
\newblock On the thinnest nonseparable lattice of convex bodies.
\newblock {\em Studia Sci. Math. Hungar.}, 13(1-2):19--27 (1981), 1978.

\bibitem{Malikiosis2012}
R.-D. Malikiosis.
\newblock A discrete analogue for {M}inkowski's second theorem on successive
  minima.
\newblock {\em Adv. Geom.}, 12(2):365--380, 2012.

\bibitem{Malikiosis2013}
R.-D. Malikiosis.
\newblock Lattice-point enumerators of ellipsoids.
\newblock {\em Combinatorica}, 33(6):733--744, 2013.

\bibitem{MayrhoferSchadeWeltge2022}
L.~Mayrhofer, J.~Schade, and S.~Weltge.
\newblock Lattice-free simplices with lattice width {$2d - o(d)$}.
\newblock In {\em Integer programming and combinatorial optimization}, volume
  13265 of {\em Lecture Notes in Comput. Sci.}, pages 375--386. Springer, Cham,
  [2022] \copyright 2022.

\bibitem{MicciancioGoldwasser2002}
D.~Micciancio and S.~Goldwasser.
\newblock {\em Complexity of lattice problems}, volume 671 of {\em The Kluwer
  International Series in Engineering and Computer Science}.
\newblock Kluwer Academic Publishers, Boston, MA, 2002.
\newblock A cryptographic perspective.

\bibitem{MilmanPajor2000}
V.~D. Milman and A.~Pajor.
\newblock Entropy and asymptotic geometry of non-symmetric convex bodies.
\newblock {\em Adv. Math.}, 152(2):314--335, 2000.

\bibitem{MilnorHusemoller1973}
J.~Milnor and D.~Husemoller.
\newblock {\em Symmetric bilinear forms}.
\newblock Ergebnisse der Mathematik und ihrer Grenzgebiete, Band 73.
  Springer-Verlag, New York-Heidelberg, 1973.

\bibitem{Minkowski1893}
H.~Minkowski.
\newblock Extract from a letter to {Mr}. {Hermite}.
\newblock {\em Bull. Sci. Math., II. S{\'e}r.}, 17:24--29, 1893.

\bibitem{Minkowski1896und1910}
H.~Minkowski.
\newblock {\em Geometrie der {Zahlen}}.
\newblock Leipzig: {B}. {G}. {Teubner}, 1896 und 1910.

\bibitem{NillPaffenholz2014}
B.~Nill and A.~Paffenholz.
\newblock On the equality case in {E}hrhart's volume conjecture.
\newblock {\em Adv. Geom.}, 14(4):579--586, 2014.

\bibitem{Peikert2014a}
C.~Peikert.
\newblock A decade of lattice cryptography.
\newblock {\em Found. Trends Theor. Comput. Sci.}, 10(4):i--iii, 283--424,
  2014.

\bibitem{ReisRothvoss2023}
V.~Reis and T.~Rothvoss.
\newblock The subspace flatness conjecture and faster integer programming,
  2023.
\newblock arXiv:2303.14605.

\bibitem{Rudelson2000a}
M.~Rudelson.
\newblock Distances between non-symmetric convex bodies and the
  {$MM^\ast$}-estimate.
\newblock {\em Positivity}, 4(2):161--178, 2000.

\bibitem{Santalo1949}
L.~A. Santal\'{o}.
\newblock An affine invariant for convex bodies of {$n$}-dimensional space.
\newblock {\em Portugal. Math.}, 8:155--161, 1949.

\bibitem{Schinzel2002}
A.~Schinzel.
\newblock A property of polynomials with an application to {S}iegel's lemma.
\newblock {\em Monatsh. Math.}, 137(3):239--251, 2002.

\bibitem{Schmidt1979}
W.~M. Schmidt.
\newblock {\em Diophantine approximation}, volume 785 of {\em Lecture Notes in
  Mathematics}.
\newblock Springer, Berlin, 1980.

\bibitem{Schmidt1991}
W.~M. Schmidt.
\newblock {\em Diophantine approximations and {D}iophantine equations}, volume
  1467 of {\em Lecture Notes in Mathematics}.
\newblock Springer-Verlag, Berlin, 1991.

\bibitem{Schneider2014}
R.~Schneider.
\newblock {\em Convex bodies: the {B}runn-{M}inkowski theory}, volume 151 of
  {\em Encyclopedia of Mathematics and its Applications}.
\newblock Cambridge University Press, Cambridge, expanded edition, 2014.

\bibitem{Schnell1995a}
U.~Schnell.
\newblock Successive minima, intrinsic volumes, and lattice determinants.
\newblock {\em Discrete Comput. Geom.}, 13(2):233--239, 1995.

\bibitem{Siegel1929}
C.~L. Siegel.
\newblock \"{U}ber einige {A}nwendungen diophantischer {A}pproximationen
  [reprint of {A}bhandlungen der {P}reu\ss ischen {A}kademie der
  {W}issenschaften. {P}hysikalisch-mathematische {K}lasse 1929, {N}r. 1].
\newblock In {\em On some applications of {D}iophantine approximations},
  volume~2 of {\em Quad./Monogr.}, pages 81--138. Ed. Norm., Pisa, 2014.

\bibitem{Sloane}
N.~J.~A. Sloane.
\newblock The on-line encyclopedia of integer sequences.
\newblock {\em Notices Amer. Math. Soc.}, 65(9):1062--1074, 2018.

\bibitem{TaoVu2006a}
T.~Tao and V.~Vu.
\newblock {\em Additive combinatorics}, volume 105 of {\em Cambridge Studies in
  Advanced Mathematics}.
\newblock Cambridge University Press, Cambridge, 2006.

\bibitem{Thue1909}
A.~Thue.
\newblock \"{U}ber {A}nn\"{a}herungswerte algebraischer {Z}ahlen.
\newblock {\em J. Reine Angew. Math.}, 135:284--305, 1909.

\bibitem{Tointon2023}
M.~Tointon.
\newblock New bounds in the discrete analogue of {M}inkowski's second theorem,
  2023.
\newblock arXiv:2303.07384.

\bibitem{Vaaler1979}
J.~D. Vaaler.
\newblock A geometric inequality with applications to linear forms.
\newblock {\em Pacific J. Math.}, 83(2):543--553, 1979.

\bibitem{Waerden1956}
B.~L. van~der Waerden.
\newblock Die {R}eduktionstheorie der positiven quadratischen {F}ormen.
\newblock {\em Acta Math.}, 96:265--309, 1956.

\bibitem{Whitworth1948}
J.~V. Whitworth.
\newblock On the densest packing of sections of a cube.
\newblock {\em Ann. Mat. Pura Appl. (4)}, 27:29--37, 1948.

\bibitem{Wills1990b}
J.~M. Wills.
\newblock Minkowski's successive minima and the zeros of a convexity-function.
\newblock {\em Monatsh. Math.}, 109(2):157--164, 1990.

\bibitem{Woods1956}
A.~C. Woods.
\newblock The anomaly of convex bodies.
\newblock {\em Proc. Cambridge Philos. Soc.}, 52:406--423, 1956.

\end{thebibliography}
\end{document}